\newcommand{\disp}{\displaystyle}
\newcommand{\Om}{\Omega}
\newcommand{\be}{\begin{equation}}
\newcommand{\ee}{\end{equation}}
\newcommand{\bestar}{\begin{equation*}}
\newcommand{\eestar}{\end{equation*}}
\newcommand{\tOm}{\tilde{\Omega}}
\newcommand{\ra}{\rightarrow}
\newcommand{\rau}{\rightharpoonup}
\newcommand{\R}{\mathbb R}
\newcommand{\N}{\mathbb N}
\newcommand{\Z}{\mathbb Z}
\newcommand{\T}{\mathbb T}
\newcommand{\Kc}{\mathcal{K}}
\newcommand{\Mc}{\mathcal{M}}
\newcommand{\pa}{\partial}
\newcommand{\na}{\nabla}
\newcommand{\dv}{\mathrm{div}\;}
\newcommand{\eps}{\epsilon}
\newcommand{\aeps}{a_\epsilon}
\newcommand{\bu}{\bar u}
\newcommand{\ue}{u^\epsilon}
\newcommand{\pe}{p^\epsilon}
\newcommand{\tB}{\tilde{B}}
\newcommand{\Vepsinf}{V^{\eps, \infty}}
\newcommand{\phiseps}{\phi_s^\eps}
\newcommand{\Tc}{\mathcal{T}}
\def\d{\partial}
\newcommand{\ud}{\mathrm{d}}
\def\div{\hbox{div  }}
\title{COMPUTATION OF THE EFFECTIVE SLIP\\ OF ROUGH HYDROPHOBIC SURFACES \\ {\it VIA} HOMOGENIZATION}
\begin{document}
\bibliographystyle{amsplain}

\markboth{Bonnivard, Dalibard, G\'erard-Varet}{Computation of the effective slip of rough hydrophobic surfaces {\it via} homogenization}

%
\catchline{}{}{}{}{}
%

\author{MATTHIEU BONNIVARD}

\address{Universit\'e Paris Diderot, Sorbonne Paris Cit\'e, Laboratoire Jacques-Louis Lions,\\ UMR 7598, UPMC, CNRS, F-75205 Paris, FRANCE\\ bonnivard@math.univ-paris-diderot.fr}

\author{ANNE-LAURE DALIBARD}
\address{DMA/CNRS, \'Ecole Normale Sup\'erieure,\\ 45 rue d'Ulm, 75005
   Paris, FRANCE\\
   Anne-Laure.Dalibard@ens.fr
   }

\author{DAVID G\'ERARD-VARET}
\address{Institut de Math\'ematiques de Jussieu, Universit\'e Paris Diderot,\\ UFR de Math\'ematiques 
B\^atiment Sophie Germain, \\
75205 Paris Cedex 13,  FRANCE\\
gerard-varet@math.jussieu.fr}

\maketitle

\begin{history}
\received{(Day Month Year)}
\revised{(Day Month Year)}
\comby{(xxxxxxxxxx)}
\end{history}

\begin{abstract}
We present a quantitative analysis of the effect of rough hydrophobic surfaces on viscous newtonian flows. 
We use a model introduced by Ybert and coauthors in Ref. \refcite{Ybert}, in which the rough surface is replaced by a flat plane with alternating small areas of slip and no-slip. We investigate the averaged slip generated at the boundary, depending on  the ratio between these areas. 
This problem reduces to  the homogenization of a non-local system, involving the Dirichlet to Neumann map of the Stokes operator, in a domain with small holes. Pondering on the works of Allaire (see Ref. \refcite{Allaire1,Allaire2}) we  compute accurate scaling laws of the averaged slip for various types of roughness (riblets, patches).  Numerical computations complete and confirm the analysis.  
\end{abstract}

\keywords{Wall laws; homogenization; effective slip.}

\ccode{AMS Subject Classification: 35B27, 76D07, 76M50}

\section{Introduction}
With the development of microfluidics, drag reduction for low Reynolds number flows, notably at  solid walls, has become a stimulating issue. Therefore,   the interaction between a fluid and a solid boundary has been investigated thoroughly, both at the experimental and theoretical levels. A special attention has been paid to the detection of slip, for various types of flows and solid walls. We refer to Ref. \refcite{Lauga} for a review.

\medskip
 As a result of this activity, the idea that {\em rough boundaries could generate  a substantial slip} has spread out.  This idea has developped on the basis of  both experimental and theoretical works, notably on {\em wall laws}.  We remind that in the context of roughness effects, a wall law is an effective boundary condition imposed at a smoothened boundary, reflecting the overall impact of the real rough boundary. 
In particular, if one describes the rough boundary through an oscillation of small amplitude and wavelength $\eps$, one can show rigorously  that a no-slip condition at the rough boundary can be replaced by a wall law of Navier type, with slip length of order $\eps$. We refer for instance to articles \refcite{AcPiVa,JaMi,BaGe} for more precise statements.  

\medskip
However, these seemingly favorable results must be considered with care. For instance, at the experimental level, one must ensure that the slip is not measured too far away from the boundary.  Also, as regards the theoretical works on wall laws, {\em the position of the artificial boundary at which the law is prescribed is crucial}. Indeed, when the artificial boundary is moved upwards by a height $h=O(\eps)$, the effective slip is also increased by $h$. Let us emphasize that all forementioned works consider artifical boundaries that are at the top of the roughness. As a result, the flow rate in the smoothened domain does not equal the averaged flow rate in the rough domain, making comparisons inaccurate.  In fact, in  the case of rough wetting surfaces (endowed with a no-slip condition), one can even show the following: if one puts  the artificial boundary in a way that the flow rates are the same, {\em then the flat boundary is optimal with respect to drag minimization}. We refer to Ref. \refcite{BuDaGe} for detailed statements and proofs. Hence, the possibility of decreasing drag through roughness is not so clear, especially for rough wetting surfaces.

\medskip
Still, in the recent years, promising results have been obtained concerning a class of rough hydrophobic surfaces, see for instance Ref. \refcite{Vino}. Indeed, by the combination of the chemical and geometrical properties of these surfaces, the hollows of the roughness get filled with gas. Hence, the viscous fluid above does not penetrate: it slips above the hollows, and only sticks at the bumps, reaching the so-called {\em Cassie or fakir state}. 

\medskip
The aim of this paper is to study the slip generated by such configurations, both in a rigorous and quantitative manner. 
We focus on a model proposed in article \refcite{Ybert}, in which the rough boundary is replaced by a flat plane, divided in small periodic cells (say of side $\eps \ll 1$). Each cell is divided in two zones: 
\begin{itemize}
\item A no-slip zone, corresponding to a plane projection of the sticky part of the roughness (bumps). 
\item A slip-zone, corresponding to a plane projection of the slippery part.
\end{itemize}
Using  homogenization techniques, we derive an effective boundary condition as $\eps$ goes to zero, depending on the characteristic scale $\aeps$ of the no-slip zones. We provide in this way scaling laws for the slip coefficients, for various configurations (patches, riblets). 
Such laws are  in global agreement with the formal computations led in Ref. \refcite{Ybert}. {\em One shows notably that the riblet configuration is less effective than patches one (see Remark \ref{rem:patches-vs-riblets}).} All our theoretical results are grounded by numerical computations at the end of the paper.

\section{Main results}

Let us first present the model under study. We consider a three-dimensional Stokes flow between two infinite plates:
\be
\label{eq:Stokes}
\begin{aligned}
-\Delta u + \na p =f\quad  \text{in }\Om,\\
\dv u =0\quad \text{in }\Om,
\end{aligned}
\ee
where $\Om= \T^2\times (0,1)$ and $\T^2= \R^2/\Z^2$. We denote by $x = (x_1,x_2,x_3) = (x_h,x_3)$ the space variable. The function $f\in L^2(\Om)$ is a given source term.
On the upper surface $x_3=1$, we enforce a ``no-slip'' boundary condition
\be\label{noslip-z=1}
u_{|x_3=1}=0.
\ee
On the lower surface, we assume that $u$ satisfies alternately ``perfect slip'' and ``no slip'' boundary conditions, corresponding respectively to the hollows and bumps of the rough hydrophobic surface. More precisely, let $\eps > 0$ and 
$$S^\eps := [0,\eps)^2 \: \sim \: \left( \R/\left(\eps \Z\right) \right)^2,  $$
 the elementary  square of side~$\eps \:$. For simplicity, we shall  assume all along that $\eps^{-1}$ is an integer.  Let $T^\eps$ be a Lipschitz subdomain of $S^\eps$, modeling an elementary no-slip zone.  Details about $T^\eps$ will be given right below. 
From this  elementary no-slip zone, we define  a global one inside $[0,1)^2 \sim \T^2$:
$$ {\cal T}^\eps \: := \:   \bigcup_{\substack{k\in [|0, \dots, \eps^{-1}|]^2}} \left(\eps k + T^\eps\right). $$
Finally, the boundary condition at $x_3=0$ is
\be 
\label{mixed-z=0}
u_{3|x_3=0}=0,\quad 
\pa_{3} u_{h|x_3=0}=0\text{ on }({\cal T}^\eps)^ c \times \{0\}, \quad u_{h|x_3=0}=0\text{ on }{\cal T}^\eps \times \{0\}.
\ee
 It is easily proved that \eqref{eq:Stokes}-\eqref{noslip-z=1}-\eqref{mixed-z=0} has a unique solution $(\ue,\pe)\in H^1(\Om)\times L^2(\Om)/\R$.
 
 \medskip
This article is devoted to the asymptotic analysis  of $(\ue,\pe)$,  as $\eps\to 0$. We will distinguish between two  types of no-slip pattern  $T^\eps$:
 \begin{itemize}
 \item \textbf{Patches:}  we assume that
\be\label{DefPatches}
T^\eps \: := \:  \begin{pmatrix}
\displaystyle \frac{\eps}{2}\\~ \\\displaystyle \frac{\eps}{2}
\end{pmatrix}  + \aeps T, 
\ee
where  $\begin{pmatrix}
\frac{\eps}{2}\\\frac{\eps}{2}
\end{pmatrix} $ is the center of the square $S^\eps$, and where the  domain $T$ is relatively compact  in the square $\displaystyle (-1/2,1/2)^2$, and contains a disk of radius $\alpha>0$, centered in the origin (see Figure \ref{Fig:T-Teps}). The parameter $\aeps$ is a positive number such that $\aeps<\eps$. In this case, the no-slip zone is a union of periodically distributed patches.

 \item \textbf{Riblets:}  we assume that  
 \be \label{DefRiblets}
 T^\eps \: := \:   (\eps \T) \times  \left(\frac{\eps}{2} + \aeps I\right).  
 \ee
 where $I\subset  (-\frac{1}{2},\frac{1}{2})$ is an  open interval (see Figure \ref{Fig:Riblets}).   
In this case, the no-slip zone is a union of stripes, invariant in the $x_1$-direction. Of course, invariance in the $x_2$ direction could have been considered as well. 
Note that later on,  addressing the case of riblets,  we shall focus on two particular cases:
\begin{itemize}
\item $f=e_1$: riblets parallel to the flow;
\item $f=e_2$: riblets perpendicular to the flow.
\end{itemize}


\begin{figure} 
	\begin{minipage}[t]{200pt}
		\includegraphics[scale=0.20]{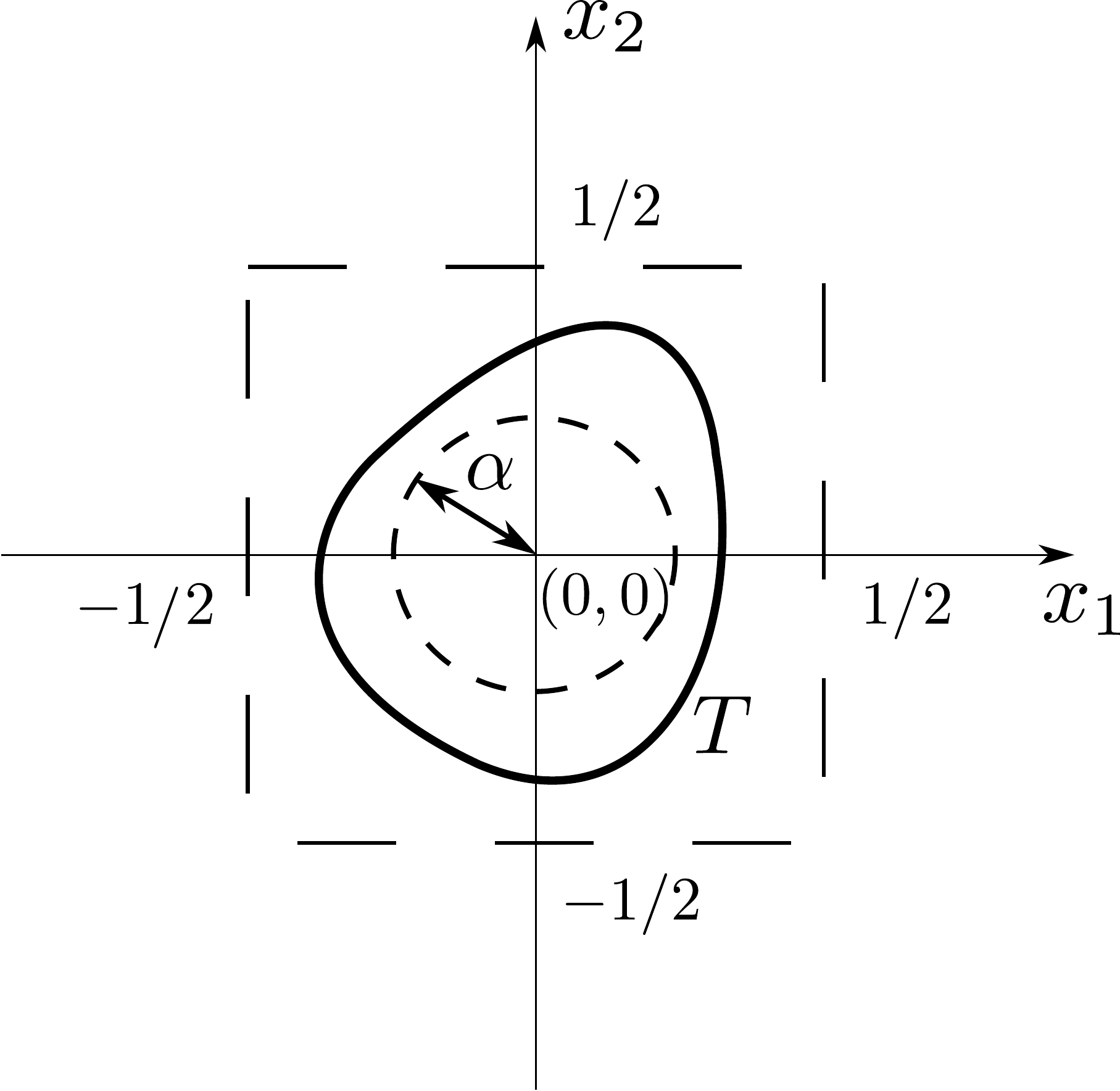}
	\end{minipage}
	\begin{minipage}[t]{160pt}
		\includegraphics[scale=0.20]{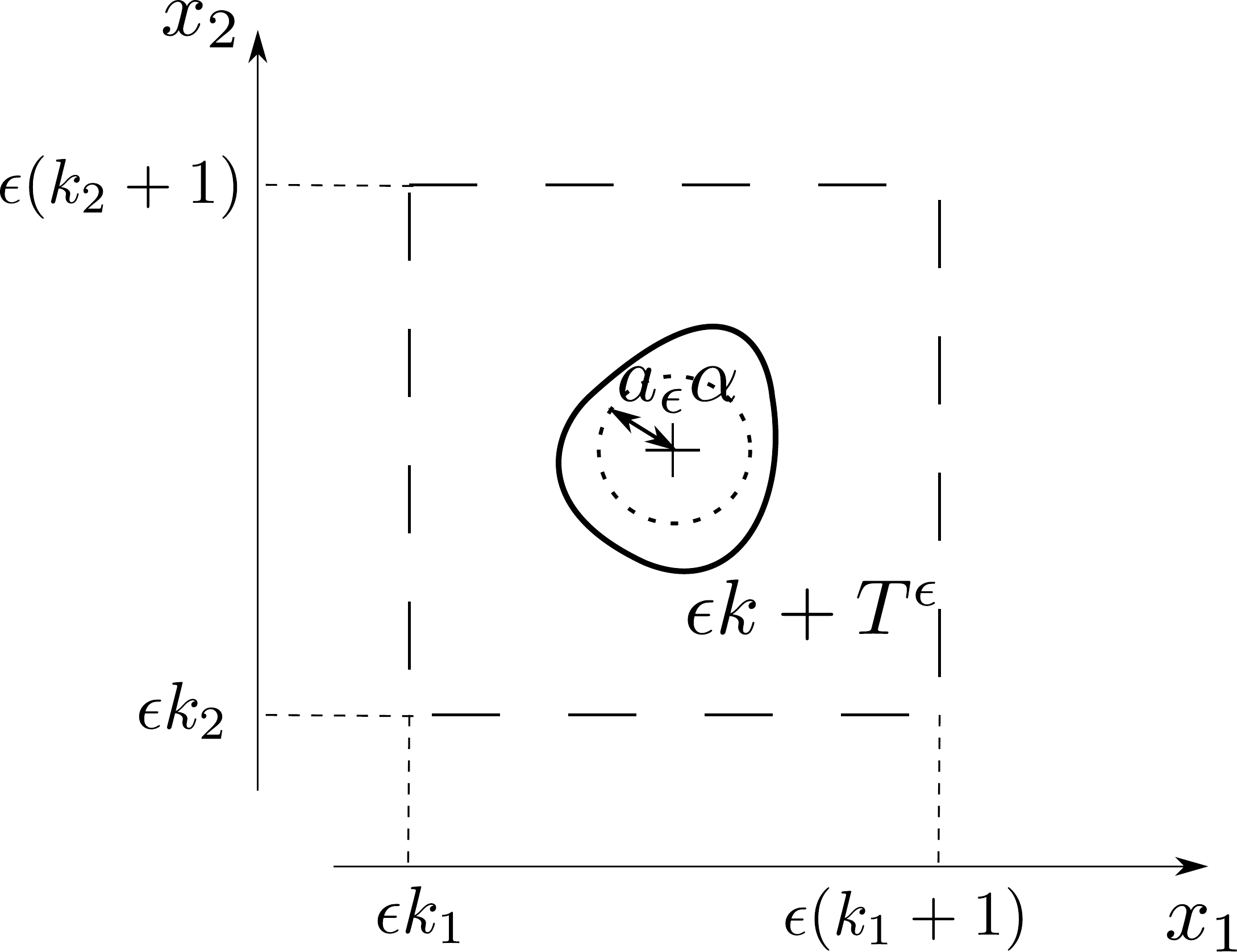}
	\end{minipage}
\caption{Patch configuration. For every $k=(k_1,k_2)\in [|0, \dots, \eps^{-1}|]^2$,
the intersection of the no-slip zone $\Tc^\eps$
with the cell $[\eps k_1,\eps(k_1+1))\times [\eps k_2,\eps(k_2+1))$ is defined by $\eps k + T^\eps = \eps k + \aeps T$.
 }\label{Fig:T-Teps}
\end{figure}

\begin{figure} 
\begin{center}
	\includegraphics[scale=0.20]{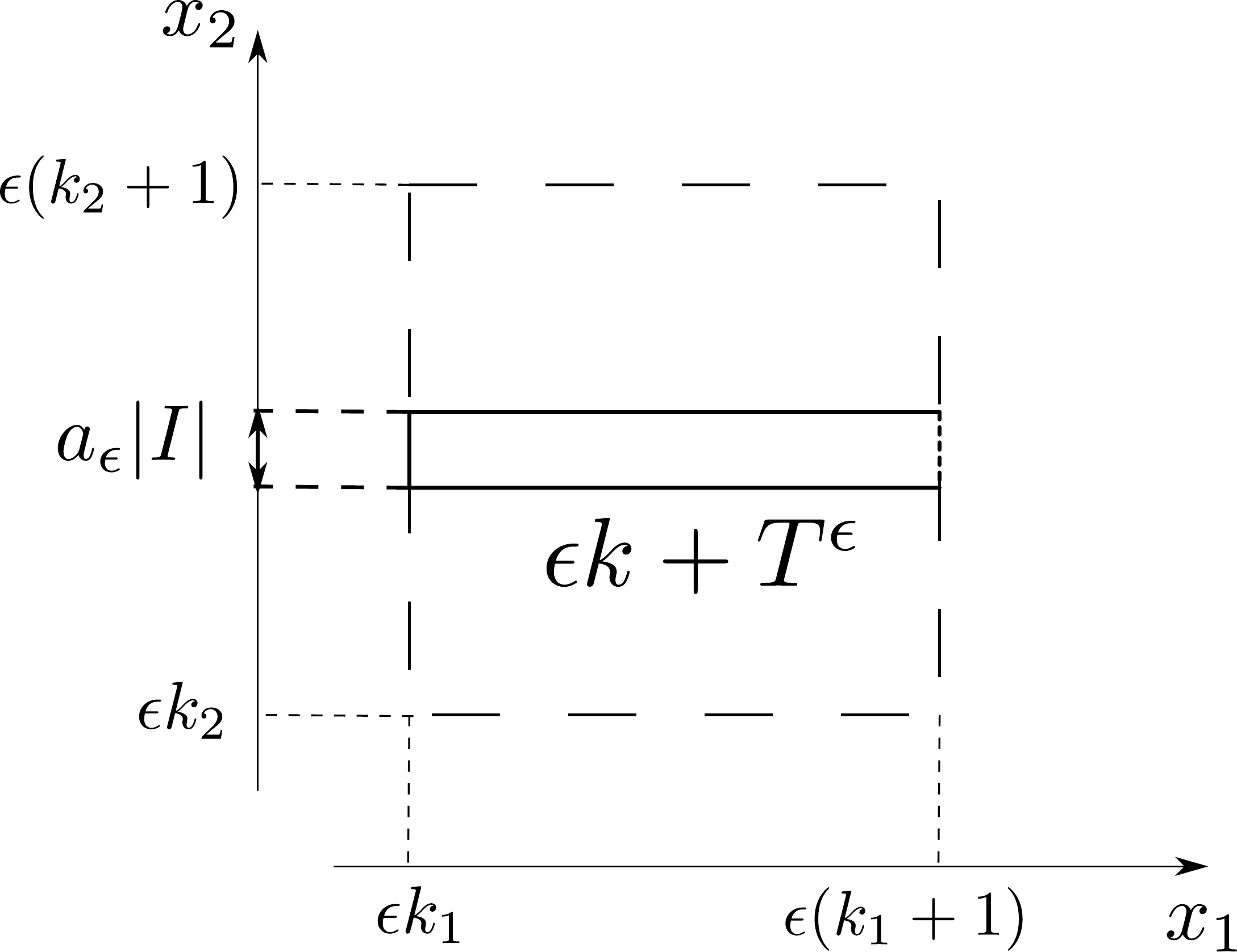}
\end{center}
\caption{Riblet configuration. For $k=(k_1, k_2)$, the intersection of the no-slip zone $\Tc^\eps$
with the cell $[\eps k_1,\eps(k_1+1))\times [\eps k_2,\eps(k_2+1))$ is defined by $\eps k + T^\eps = \eps k + (\eps \T) \times   (\frac{\eps}{2} + \aeps I)$. } \label{Fig:Riblets}
\end{figure}

  \end{itemize}
The issue is to derive a wall law for the system \eqref{eq:Stokes}-\eqref{noslip-z=1}-\eqref{mixed-z=0}, that is, to replace the mixed boundary condition \eqref{mixed-z=0} at $x_3=0$ by a condition which does not depend on $\eps$. We will show that $\ue$ behaves asymptotically like  the solution $\bu$  in $H^1$ of \eqref{eq:Stokes}-\eqref{noslip-z=1}, endowed either with a Navier boundary condition
\be\label{wall-law-Navier}
u_3=0\text{ at } x_3=0,
\quad  \pa_3 u_h  = M u_h  \text{ at } x_3=0,
\ee
or with a Dirichlet boundary condition
\be\label{wall-law-Dirichlet}
u_{|x_3=0}=0.
\ee
In \eqref{wall-law-Navier}, $M$ is a $2\times 2$ non-negative matrix, whose eigenvalues have the dimension of the inverse of a length. If $M=\lambda \mathrm{Id}$, the number $\lambda^{-1}$ is called the ``slip length''. In the general case, the inverse of the eigenvalues provide the slip lengths in the directions of the eigenvectors.  We shall denote  $\bar{u}_M$ the solution of \eqref{eq:Stokes}-\eqref{noslip-z=1}-\eqref{wall-law-Navier}. We will write  $\bar{u}_0$ in the special case $M = \left(\begin{smallmatrix} 0  & 0 \\ 0 & 0 \end{smallmatrix} \right)$. Eventually, we shall denote $\bar{u}_\infty$ the solution of \eqref{eq:Stokes}-\eqref{noslip-z=1}-\eqref{wall-law-Dirichlet}.

\medskip
With the previous notation, we can state our first result: 
\begin{theorem} (Asymptotic behavior for patches) 

\smallskip
Assume that $T^\eps \: := \:   (\frac{\eps}{2},\frac{\eps}{2})  + \aeps T$, where $T\Subset (-1/2,1/2)^2$ contains a disc of radius $\alpha>0$ centered in the origin. Let $\ue\in H^1(\T^2\times(0,1))$ be the solution of \eqref{eq:Stokes}, \eqref{noslip-z=1}, \eqref{mixed-z=0}.  One must distinguish between three cases:   

\begin{enumerate}
\item Sub-critical case: if $\aeps\ll \eps^2$, then $\ue\rightharpoonup \bu_0$ in $H^1(\T^2\times(0,1))$;
\item Super-critical case: if $\aeps\gg \eps^2$, then $\ue\rightharpoonup \bu_\infty$ in $H^1(\T^2\times(0,1))$; 
\item Critical case: there exists a symmetric, positive definite matrix $M_0$ such that if ${\aeps}/{\eps^2}\to C_0>0$, then $\ue\rightharpoonup \bu_{C_0M_0}$ in $H^1(\T^2\times(0,1))$.

\end{enumerate}

\label{thm:patch}
\end{theorem}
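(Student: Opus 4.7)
I would follow the strange-term / capacitary approach of Cioranescu--Murat and Allaire \cite{Allaire1,Allaire2}, adapted to the present setting where the Dirichlet obstacles are located on the two-dimensional flat face $x_3=0$ of the three-dimensional domain $\Om$. Using the reduction sketched in the introduction, the Stokes Dirichlet-to-Neumann operator on $\Om$ (with no-slip top) acts on the trace $u_h|_{x_3=0}$ like a first-order non-local operator of order $1/2$ in the tangential variables; for such an operator in $\R^2$, the capacity of a disk of radius $a$ scales as $a^{2-2\cdot \frac{1}{2}} = a$. Summing over the $\eps^{-2}$ patches per unit area yields a total mass of order $a_\eps/\eps^2$, which dictates the three regimes in the statement: subcritical obstacles are invisible in the limit ($\bu = \bu_0$), supercritical ones saturate the trace ($\bu = \bu_\infty$), and critical ones produce a finite effective matrix.

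\textbf{Cell problem and matrix $M_0$.} For every direction $\xi\in\R^2$, I define $\phi_\xi:\R^3_+\to\R^3$ as the unique weak solution of the half-space Stokes problem
\begin{equation*}
-\Delta \phi_\xi + \na q_\xi = 0,\qquad \dv \phi_\xi = 0 \quad\text{in }\R^3_+,
\end{equation*}
with boundary conditions $(\phi_\xi)_h = \xi$ on $T$, $\pa_3 (\phi_\xi)_h = 0$ on $\R^2\setminus T$, $(\phi_\xi)_3 = 0$ on $\R^2$, together with $\phi_\xi\ra 0$ at infinity. Existence and uniqueness follow from a variational argument in the homogeneous Sobolev space of divergence-free vector fields on $\R^3_+$; the hypothesis that $T$ contains a disk of positive radius $\alpha$ ensures nondegeneracy. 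Setting
\begin{equation*}
M_0\,\xi\cdot\xi \: := \: \int_{\R^3_+} |\na \phi_\xi|^2,
\end{equation*}
I obtain by polarization a symmetric, positive definite $2\times 2$ matrix $M_0$.

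\textbf{Oscillating corrector and passage to the limit.} In each cell $\eps k+S^\eps$, I rescale $\phi_\xi$ to the patch of size $a_\eps$, cut off smoothly at distance $\eps/3$ of the patch, and restore the divergence-free constraint by a local Bogovskii lifting. Summing over $k$ yields a family $\phi_\xi^\eps\in H^1(\Om)$ of correctors whose horizontal trace equals $\xi$ on $\Tc^\eps$, vanishes at $x_3=1$ and far from the patches, and satisfies the scaling
\begin{equation*}
\|\na \phi_\xi^\eps\|_{L^2(\Om)}^2 \,=\, \frac{a_\eps}{\eps^2}\, M_0\,\xi\cdot\xi\ +\ o(1).
\end{equation*}
A uniform $H^1$-bound on $\ue$ is obtained from the variational formulation and Poincar\'e (using $\ue=0$ at $x_3=1$), so $\ue\rau \bu$ along a subsequence. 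The method of oscillating test functions then consists in substituting, for smooth divergence-free $v$ satisfying $v=0$ on $x_3=1$ and $v_3=0$ on $x_3=0$, the admissible test function $v-\phi_{v_h(x_h,0)}^\eps$ in the variational formulation for $\ue$. Passing to the limit and identifying the crossed terms through the energy of $\phi_\xi^\eps$ gives: in the subcritical case the extra boundary term vanishes and $\bu=\bu_0$; in the critical case it equals exactly $\int_{\T^2}(C_0 M_0\bu_h)\cdot v_h$, i.e.\ the weak form of $\pa_3\bu_h = C_0 M_0\bu_h$, so $\bu=\bu_{C_0M_0}$; in the supercritical case, one uses the correctors at the critical scale $\eps^2$ (rather than $a_\eps$) and lets the effective matrix diverge, which forces $\bu_h|_{x_3=0}=0$, so $\bu=\bu_\infty$. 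Uniqueness of the limit problems then upgrades the convergence to the full sequence.

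\textbf{Main obstacle.} The genuine technical difficulty is the corrector construction. The Stokes system carries a divergence-free constraint, and because $\xi$ is imposed on a \emph{flat}, codimension-one set in $\pa\R^3_+$, the decay of $\phi_\xi$ at infinity is only algebraic (its far-field behaves like the kernel of $(-\Delta)^{-1/2}$ on $\R^2$). Cutting off at distance $\eps/3$ and Bogovskii-correcting the non-zero divergence thus generates remainder terms whose $L^2$-norm is borderline; controlling them requires sharp pointwise and pressure estimates on the tail of $\phi_\xi$, and uses crucially the assumption $T \subset\subset (-1/2,1/2)^2$ to keep neighboring cell correctors disjoint. Once the corrector is in hand with the right capacitary energy, the passage to the limit in the three regimes follows from the classical scheme described above.
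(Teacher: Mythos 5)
Your overall strategy matches the paper's: Allaire-type homogenization with a half-space Stokes cell problem, a capacitary matrix $M_0$, correctors rescaled to the patch size, and oscillating test functions. Your cell problem and matrix are the same objects the paper uses (the paper expresses $M_0$ through the drag matrix $F_{ij}=-2\int_{T}\partial_3 W_{ij}=2\int_{\R^3_+}\nabla W_{\cdot i}:\nabla W_{\cdot j}$, which is proportional to your $\int|\nabla\phi_\xi|^2$). Two steps, however, are treated genuinely differently, and one of them has a gap in your version.

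\textbf{Super-critical case.} The paper does not push the corrector machinery here at all: it proves in the Appendix a quantitative trace estimate $\int_{\T^2\times\{0\}}|u^\eps|^2\le\eta(\eps)\int_\Omega|\nabla u^\eps|^2$ with $\eta(\eps)\to 0$, which immediately gives $\bar u|_{x_3=0}=0$. Your suggestion — ``use the correctors at the critical scale $\eps^2$ and let the effective matrix diverge'' — does not work as stated. A corrector built for fictitious patches of size $b_\eps$ with $\eps^2\ll b_\eps\ll a_\eps$ does \emph{not} cancel the trace of the test function on the whole set $\Tc^\eps$ (only on the smaller fictitious patch), so $v-\phi_{v_h}^\eps$ is not admissible; and the corrector at scale $a_\eps$ has $\|\nabla\phi^\eps_\xi\|_{L^2}^2\sim a_\eps/\eps^2\to\infty$, so it does not converge weakly in $H^1$ and cannot be used in the usual scheme. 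One could try to repair this with a monotonicity/comparison argument in $\eps$, but you do not say how; the paper's direct Poincar\'e estimate is the clean route.

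\textbf{Corrector construction and identification of the cross term.} The paper follows Allaire literally: in each cell it rescales the base flow inside the ball of radius $\eps/4$, solves an auxiliary Stokes problem in the annulus between radii $\eps/4$ and $\eps/2$, and sets the corrector to zero outside — so the corrector is an \emph{exact} Stokes solution in the buffer zone and the divergence-free constraint holds with no correction. The cross term $\int_\Omega\nabla W^\eps_{ij}\cdot\nabla\psi^\eps_i\,\phi_j-\int_\Omega q^\eps_j\,\partial_i\psi^\eps_i\,\phi_j$ is then reduced by integration by parts to surface integrals on $\partial C_k^\eps$, whose weak-$*$ limit is read off from the Stokeslet far-field expansion and Allaire's Dirac-measure convergence $\sum_k\delta_{\partial C_k^\eps}\to c\,\delta_{\T^2\times\{0\}}$. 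Your cutoff-plus-Bogovskii construction can work, but the phrase ``identifying the crossed terms through the energy of $\phi^\eps_\xi$'' is the place where the real work is hidden: the cross term couples $u^\eps$ with the corrector and is not a corrector energy; turning it into a surface term requires the corrector to solve the Stokes system in the region where the test function lives, and the smooth cutoff and the Bogovskii correction destroy that property precisely in the shell where the boundary term must be extracted. You acknowledge that sharp tail estimates are needed, but the step that replaces Allaire's Dirac-measure lemma is missing, and it is not a routine remainder estimate.

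Apart from these two points, the architecture (variational existence for the cell problem in $D^{1,2}(\R^3_+)$; uniform $H^1$ bound and weak compactness; oscillating test functions; identification of the Navier boundary condition; upgrade to the full sequence by uniqueness of the limit problem) is essentially identical to the paper's.
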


A similar result holds for riblets. Let us merely state the theorem in the critical case:
\begin{theorem} (Asymptotic behaviour for riblets)

\smallskip
Assume that $T^\eps \: := \:   (\eps \T) \times  (\aeps I)$, where $I\subset(-1/2,1/2)$ is an open interval.   Suppose that $\lim_{\eps\to 0} -\eps\ln(\aeps)=C_0>0$, and furthermore that $f$ does not depend on $x_1$.

\smallskip
Then, $\ue\rightharpoonup \bu_{M_{rib}}, \: $ where
\be \label{Mparallel}
M_{rib}= \begin{pmatrix}
\frac{\pi}{C_0}&0\\0&\frac{2\pi}{C_0}
\end{pmatrix}. 
\ee
Additionally, when $f=e_1$ or $f=e_2$, the limit system can be simplified:
\begin{itemize}
\item if $f=e_1$ (riblets parallel to the main flow), then $\bu_{M_{rib},2}=\bu_{M_{rib},3}=0$ and $\bu_{M_{rib},1}$ satisfies
$$
\pa_3 \bu_{M_{rib},1}= \frac{\pi}{C_0} \bu_{M_{rib},1}\quad \text{at } x_3=0.
$$
Hence, the slip length is $C_0/\pi$;

\item if $f=e_2$ (riblets perpendicular to the main flow), then $\bu_{M_{rib},1}=0$ and $\bu_{M_{rib},2}$ satisfies
$$
\pa_3 \bu_{M_{rib},2}= \frac{2\pi}{C_0} \bu_{M_{rib},2}\quad \text{at } x_3=0.
$$
Hence, the slip length is $C_0/(2\pi)$.
\end{itemize}

\label{thm:riblet}
\end{theorem}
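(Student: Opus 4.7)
\medskip

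\noindent\textbf{Proof plan.} The strategy is to exploit the $x_1$-invariance of both the no-slip pattern $\Tc^\eps$ and the source $f$ in order to reduce the 3D Stokes problem to two uncoupled 2D problems on the strip $\Si := \T_{x_2} \times (0,1)$, and then to perform on each of them a two-dimensional boundary-layer homogenization with periodic Dirichlet segments of size $\aeps$ on the lower boundary. By uniqueness the solution $(\ue, \pe)$ inherits the $x_1$-independence of the data, and the full system then decouples into (i) a scalar Poisson equation $-\Delta_{x_2, x_3} \ue_1 = f_1$ on $\Si$, with Dirichlet condition at $x_3 = 1$ and alternating Dirichlet/Neumann condition at $x_3 = 0$, and (ii) a 2D incompressible Stokes problem for $(\ue_2, \ue_3, \pe)$ on $\Si$ with $\ue_3 = 0$ everywhere on the bottom and the analogous alternating conditions on $\ue_2$.

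\medskip

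\noindent Each 2D subproblem is then treated by Tartar's oscillating test-function method, using an appropriate boundary-layer corrector on the rescaled periodic half-strip $\T_{y_2} \times \R_+$, with $y = x/\eps$. For the scalar part, the corrector $v^\eps$ is harmonic, vanishes on the rescaled segment of length $\aeps/\eps$, has zero Neumann trace elsewhere on $\{y_3 = 0\}$, and satisfies $v^\eps(y) \sim y_3 - \kappa^\eps$ as $y_3 \to \infty$. A classical 2D computation (conformal mapping, or Fourier separation combined with the dual mixed problem) yields $\kappa^\eps \sim (1/\pi)\ln(\eps/\aeps)$, so that under the critical scaling $-\eps\ln \aeps \to C_0$ the rescaled slip length $\eps\,\kappa^\eps$ converges to $C_0/\pi$, producing the wall law $\pa_3 \bar u_1 = (\pi/C_0)\,\bar u_1$. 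For the Stokes part, a vector-valued divergence-free corrector $(W^\eps, Q^\eps)$ satisfies the 2D Stokes system with the same mixed conditions and the asymptotics $W^\eps(y) \sim (y_3 - \mu^\eps, 0)$; because the 2D Stokeslet has logarithmic prefactor $1/(2\pi)$ instead of $1/\pi$, one gets $\mu^\eps \sim (1/(2\pi))\ln(\eps/\aeps)$, hence a slip length $C_0/(2\pi)$ and the law $\pa_3 \bar u_{M_{rib},2} = (2\pi/C_0)\,\bar u_{M_{rib},2}$. Combining the two laws produces the diagonal matrix $M_{rib}$ of \eqref{Mparallel}.

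\medskip

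\noindent The two particular cases follow immediately from the decoupling: if $f = e_1$, the 2D Stokes subsystem is sourceless and the homogeneous mixed boundary data force $\bar u_{M_{rib}, 2} = \bar u_{M_{rib}, 3} = 0$, leaving only $\bar u_{M_{rib}, 1}$ with the announced Navier condition; if $f = e_2$, the scalar equation for $u_1$ is sourceless, so $\bar u_{M_{rib}, 1} = 0$ and only $\bar u_{M_{rib}, 2}$ survives. The main difficulty I foresee is the sharp analysis of the Stokes cell corrector and, in particular, the identification of the prefactor $2\pi$ (versus $\pi$ in the scalar case) via matched asymptotic expansions that combine the logarithmic structure of the 2D Stokeslet with the divergence-free constraint. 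A secondary technical obstacle is the construction, in the Stokes setting, of oscillating test functions that are exactly divergence-free and vanish on $\Tc^\eps$, so that the pressure term in the weak formulation does not contribute in the limit.
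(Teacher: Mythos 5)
Your reduction to two decoupled 2D problems via the $x_1$-invariance of $\mathcal{T}^\eps$ and $f$, followed by homogenization of a scalar Poisson equation for $u_1$ and a 2D Stokes system for $(u_2,u_3,p)$ in the critical scaling $-\eps\ln\aeps\to C_0$, is exactly the paper's strategy (systems \eqref{Laplace2d} and \eqref{Stokes2d}). The paper then refers to Allaire's 2D results for the Stokes part and to Cioranescu--Murat for the scalar part and gives no further detail, so your proposal is structurally a match. Where you diverge is in the choice of corrector: the paper (by analogy with its own Section \ref{sec:patch}) implicitly uses the Allaire-style \emph{local} corrector, rescaled near each Dirichlet segment, truncated and periodized so that it converges weakly to zero in $H^1$; you instead propose a \emph{global} boundary-layer corrector on $\T\times\R_+$ with linear growth $v^\eps\sim y_3-\kappa^\eps$ and read off the slip length from $\eps\kappa^\eps$. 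That picture is the one the paper itself uses later for the numerics (and is consistent with the Philip formula \eqref{ExactFormulaRib}), but note that $v^\eps$ does not converge weakly to zero and $\kappa^\eps\to\infty$, so it is not directly usable as a Tartar oscillating test function without a renormalization (subtracting the linear part, then matching to an Allaire-type truncation near the segments). Your identification of the prefactors via half-plane Green's function / Stokeslet asymptotics gives the right ratio of $2$ and the right constants $C_0/\pi$ and $C_0/(2\pi)$, but this step -- which you yourself flag as the main difficulty -- is where the Allaire--Cioranescu--Murat machinery (drag/capacity of the segment computed via the exterior cell problem, and convergence of Dirac masses on $\partial C^\eps_k$) is what actually makes the passage rigorous. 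So: same route, one genuine (and acknowledged) gap in converting the boundary-layer picture into a legitimate oscillating-test-function argument.
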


\begin{remark}
Notice that in the critical and supercritical cases, the slip length is respectively of order one and infinite in the limit. Therefore large slip is achieved in the limit, which differs  from previous papers on the subject (see Ref. \refcite{JaMi,AmSi}).
\end{remark}

\begin{remark}\label{rem:patches-vs-riblets}

\end{remark}

\begin{remark}
Our results are consistent with those of Ref. \refcite{Ybert}: indeed, in the case of patches, it is shown heuristically there  that the slip length is proportional to $\eps^2/\aeps$: in other words, if $\aeps\ll \eps^2$, perfect slip is achieved, if $\aeps\gg \eps^2$,  a no-slip condition is retrieved in the limit, and in the critical case, the slip length is positive and finite.

\medskip
Also, explicit calculations (see Ref. \refcite{Phi}) recalled in Ref. \refcite{Ybert} show that the slip length for riblets is equal to $-\eps/\pi \ln (\aeps/\eps)$ for riblets parallel to the flow, and to $-\eps/(2\pi) \ln (\aeps/\eps)$ for riblets perpendicular to the flow. Once again, this is consistent with Theorem \ref{thm:riblet}.

\end{remark}

\begin{remark}
Theorems 1 and 2 do not support the idea that rough hydrophobic surfaces can generate  a substantial slip. Indeed, to obtain an effective slip law, the surface fraction of no-slip has to be very small. Back to wall roughness, this would correspond to  narrow peaks separated by (comparatively) large hollows. It seems far from the roughness characteristics used experimentally to obtain a hydrophobic Cassie state.     
\end{remark}

The proofs of theorems 1 and 2, that rely strongly on the papers  \refcite{Allaire1,Allaire2} by Allaire, are given in Section \ref{sec:patch} and \ref{sec:riblet} respectively. We  then present in Section \ref{sec:simul} numerical simulations that confirm the asymptotic results, and clarify the influence of the shape of patches on the slip length, i.e. on the eigenvalues of the matrix $M$.


\section{Asymptotic study of ``patch'' designs}
\label{sec:patch}

This section is devoted to the proof of Theorem \ref{thm:patch}. Let $(\ue,\pe)\in H^1(\Om)\times L^2(\Om)/\R$ be the solution of \eqref{eq:Stokes}, \eqref{noslip-z=1}, \eqref{mixed-z=0}. By classical arguments, the sequence $(\ue,\pe)$ is uniformly bounded in $H^1(\Om)\times L^2(\Om)/\R$, and consequently, there exists a couple $(\overline{u},\overline{p})\in H^1(\Om)\times L^2(\Om)/\R$ such that
\bestar
u^\eps \rau \overline{u} \quad \textrm{weakly in }H^1(\Om),\quad p^\eps \rau \overline{p} \quad \textrm{weakly in }L^2(\Om)/\R.
\eestar
Using the weak formulation of Eqs.\,\eqref{eq:Stokes} and the continuity of the trace operator, one obtains easily that the weak-limit $(\overline{u},\overline{p})$ satisfies Eqs.\,\eqref{eq:Stokes} and boundary condition \eqref{noslip-z=1} on $x_3=1$. On $x_3=0$, the boundary condition satisfied by the vertical component is preserved in the limit, and we obtain
$
\overline{u}_{3|x_3=0}=0.
$
 To describe the boundary condition satisfied by the horizontal components $(\overline{u}_1,\overline{u}_2)$ on $x_3=0$, we need to distinguish between the so-called super-critical, critical and sub-critical cases.
 
\medskip 
 
\textbf{Notation.}  For every $k=(k_1,k_2)\in [|0,\eps^{-1}|]^2$, we 
denote the elementary  squares,  cubes and half-cubes as follows:
\be
S^\eps_k \: := \: \eps k + S^\eps, \quad    P_k^\eps \:  := S^\eps_k \times (-\frac{\eps}{2},\frac{\eps}{2}),  \label{def:cube-Pk}  \quad
P_k^{\eps,+} \: := \:  P_k^\eps \cap \R_+^3.
\ee
We shall use that notation throughout the paper. 

\medskip
\textbf{Super-critical case: $\aeps\gg \eps^2$.}
The proof in the super-critical case relies on a quantitative Poincar\'e inequality: we claim that there exist $\eps_0 > 0$ and a positive function $\eta(\eps)$ such that $\eta(\eps)\ra 0$ as $\eps\ra 0$, and such that
\be\label{ineq:super-critical} 
\int_{\T^2\times\{0\}} |u^{\epsilon}|^2 \leq \eta(\epsilon)\int_{\Omega}|\nabla u^{\epsilon}|^2\qquad \forall \epsilon\in (0,\epsilon_0).
\ee
We provide a proof of this inequality in the Appendix.

\medskip
Since $\ue$ is bounded in $H^1(\Om)$, we immediately infer that $\bar u_{|x_3=0}$ vanishes in $L^2(\T^2)$. Thus $(\bar u, \bar p)$ is a solution of the Stokes system with homogeneous Dirichlet boundary conditions at $x_3=0$ and $x_3=1$, i.e. $\bar u = \bu_\infty$.

\medskip

\textbf{Critical and sub-critical cases: $\aeps\lesssim\eps^2$.}
We  follow here the strategy of articles \refcite{Allaire1,Allaire2} by Allaire. These articles deal with the homogenization of the Stokes equations across a network of balls, with a Dirichlet condition  at the surface of the balls. Notably, in section 4 of article \refcite{Allaire1}, the  balls are assumed to be  distributed along a hypersurface (for instance, 3d balls with  centers  periodically located on a plane). In the setting considered here,  the rough idea is to extend the Stokes solution to the lower half-space by appropriate symmetry:  our problem  is then reduced to  the homogenization of the Stokes equations across a planar network of patches. Hence,  the ideas of Ref. \refcite{Allaire1}, devoted to a planar network of balls, essentially apply.  They are based on the construction of correctors and the method of oscillating test functions.  We start with
\begin{lemma}[Existence of correctors]\label{lemma:patches}
Assume that $\aeps\lesssim\eps^2$.
For every $\eps>0$, there exist $W^\eps=(W_{i,j}^\eps)_{1\leq i,j\leq 3}\in H^1(\Om)^9$, $q^\eps=(q_j^\eps)_{1\leq j\leq 3}\in L^2(\Om)^3$, supported in $\T^2\times [-\eps/2,\eps/2]$, which satisfy the following properties:
\begin{itemize}
\item[(i)] $W^\eps \rau 0$ weakly in $H^1(\Om)$, $q^\eps\rau 0$ weakly in $L^2(\Om)$;
\item[(ii)] for every $j=1\ldots 3$, $\sum_i\d_iW_{ij}^\eps = 0$ in $\Om$;
\item[(iii)] for $1\leq i,j\leq 3$, $W_{3j}^\eps=W_{i3}^\eps=0$ on $\T^2\times \{0\}$, and for $1\leq i,j\leq 2$, $W_{ij}^\eps=\delta_{ij}$ on ${\cal T}^\eps \times \{0\}$;
\item[(iv)] For every $\phi\in C^{\infty}(\overline{\Om})^3$, every $\psi\in H^1(\Om)^3$ and every sequence $\psi^\eps\in H^1(\Om)^3$ satisfying the boundary conditions
\be \label{BC:Psi-eps}
\psi^\eps = 0\ \textrm{on } ({\cal T}^\eps\times\{0\}) \cup (\T^2\times\{1\}),\quad \psi_3^\eps=0 \ \textrm{on }\T^2\times\{0,1\},
\ee
and converging weakly to $\psi$ in $H^1(\Om)^3$, the following relation holds: if $\lim_{\eps\to 0}\aeps/\eps^2=C_0\geq 0$, then 
\be \label{caracterisation:matrice-M}
\lim_{\eps\ra 0} \sum_{1\leq i,j\leq 3} \left(\int_{\Om} \nabla W_{ij}^\eps \cdot \nabla \psi_i^\eps \phi_j - \int_{\Om} \d_i \psi_i^\eps q_j^\eps \phi_j \right) = -C_0 \int_{\T^2\times \{0\}} M_0\psi_h\cdot \phi_h.
\ee
where   $M_0\in \Mc_2(\R)$ is the symmetric definite positive matrix  given by formula \eqref{M0}.

\end{itemize}

\end{lemma}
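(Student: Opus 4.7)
\emph{Strategy.} We adapt Allaire's strange-term construction \cite{Allaire1,Allaire2} for the Stokes system with small obstacles. The preliminary step is to extend all relevant fields across $\{x_3=0\}$ by symmetry (horizontal components even, vertical component odd): the mixed boundary condition \eqref{mixed-z=0} becomes a smooth reflection away from the patches, while each patch $\eps k + (\eps/2,\eps/2,0) + \aeps T$ turns into a flat two-dimensional obstacle embedded in $\R^3$. The problem is thus reduced to building correctors near a planar periodic array of small obstacles, to which the machinery of \cite{Allaire1} essentially applies.

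\emph{Construction of $W^\eps$ and $q^\eps$.} For each $j\in\{1,2\}$, let $(V_j,\pi_j)$ be the unique decaying solution of the exterior Stokes problem
\bestar
-\Delta V_j + \nabla \pi_j = 0,\quad \dv V_j = 0 \text{ in }\R^3 \setminus T,\quad V_j = e_j\text{ on } T,\quad V_j(y)\to 0\text{ as }|y|\to\infty.
\eestar
Since $T$ is a flat compact set contained in $\{y_3=0\}$, this solution enjoys the far-field decay $V_j(y) = O(|y|^{-1})$, $\nabla V_j(y) = O(|y|^{-2})$, and by the $x_3$-symmetry of the data one has $(V_j)_3 \equiv 0$ on $\{y_3=0\}$. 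Set $x_k^\eps := \eps k + (\eps/2, \eps/2, 0)$ and let $\chi$ be a smooth cutoff equal to $1$ near $0$ and vanishing at $1$. For $1\le i \le 3$ and $j \in \{1,2\}$, define in each upper half-cell $P_k^{\eps,+}$
\bestar
W_{ij}^\eps(x) := \chi\!\left(\tfrac{2x_3}{\eps}\right)(V_j)_i\!\left(\tfrac{x - x_k^\eps}{\aeps}\right),\quad q_j^\eps(x) := \aeps^{-1} \chi\!\left(\tfrac{2x_3}{\eps}\right)\pi_j\!\left(\tfrac{x-x_k^\eps}{\aeps}\right),
\eestar
and $W_{i3}^\eps \equiv 0$, $q_3^\eps \equiv 0$. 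The cutoff destroys incompressibility by a small $L^2$-source, which is restored by a standard Bogovskii-type correction supported in the same cell. A direct scaling computation using the above decay yields $\|\nabla W^\eps\|_{L^2(P_k^{\eps,+})}^2 \simeq \aeps$, so, summing over the $\eps^{-2}$ cells, $\|\nabla W^\eps\|_{L^2(\Om)}^2 \simeq \aeps/\eps^2$ is uniformly bounded under $\aeps\lesssim \eps^2$, while $\|W^\eps\|_{L^2(\Om)}^2 \simeq \aeps^2/\eps \to 0$; this yields (i). Properties (ii) and (iii) follow from the construction together with the symmetry of $V_j$.

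\emph{Strange term in (iv) and main obstacle.} Integrating by parts cell by cell in the left-hand side of \eqref{caracterisation:matrice-M}, and using that $(V_j,\pi_j)$ satisfies the Stokes system outside $T$, the bulk and lateral contributions vanish in the limit thanks to the far-field decay of $V_j$; what remains is a boundary flux over each rescaled patch. Writing $\psi_i^\eps(x) = \bigl(\psi_i^\eps(x) - \psi_i^\eps(x_k^\eps)\bigr) + \psi_i^\eps(x_k^\eps)$, using the boundary condition $\psi^\eps = 0$ on $\Tc^\eps\times\{0\}$ and the Dirichlet-energy identity associated with $V_j$, this boundary term is identified, after summation over $k$, as a Riemann sum converging to
\bestar
-C_0\int_{\T^2 \times \{0\}} M_0\,\psi_h\cdot \phi_h,\qquad M_0 e_i\cdot e_j := \int_{\R^3\setminus T} \nabla V_i : \nabla V_j,
\eestar
since $\aeps/\eps^2 \to C_0$ and each cell contributes an integrated flux of order $\aeps$. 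Symmetry and positive-definiteness of $M_0$ follow from its definition as a Dirichlet form, combined with the assumption that $T$ contains a disk of radius $\alpha>0$, which guarantees the linear independence of $V_1$ and $V_2$. The main technical obstacle is precisely this replacement of $\psi^\eps$ by its values at cell centers: since $\psi^\eps$ only converges weakly in $H^1$, one needs sharp local energy estimates in annular neighbourhoods $\aeps < |x-x_k^\eps| < \eps$ of the patches, combined with crucial use of the vanishing trace on $\Tc^\eps \times \{0\}$ to recover the required compactness. This step is the technical heart of the oscillating test function method of \cite{Allaire1}, to which we refer for the detailed estimates.
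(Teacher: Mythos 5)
Your overall strategy matches the paper's: reflect across $\{x_3=0\}$, reduce to a planar array of flat obstacles, build a corrector from a rescaled exterior Stokes solution near each patch, and identify the strange term by the oscillating test function method. However, there are two concrete gaps. First, your construction truncates only in $x_3$: since $V_j$ decays like $|y|^{-1}$ and is not periodic, the cell-by-cell definition $W^\eps_{ij}(x)=\chi(2x_3/\eps)(V_j)_i((x-x_k^\eps)/\aeps)$ is discontinuous across the lateral faces of the cells $P_k^{\eps,+}$ and so does not define an element of $H^1(\Om)$; one needs a cutoff in all directions (and a Bogovskii fix with respect to all cutoffs, not just the vertical one). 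The paper resolves this with the three-zone decomposition $P_k^\eps=C_k^\eps\cup D_k^\eps\cup K_k^\eps$: rescaled Stokeslet in the inner ball, an intermediate Stokes solve matching the boundary data in the annulus, and identically zero near the cube faces, which guarantees global $H^1$ regularity and periodicity by construction.

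Second, and more importantly, the claimed formula $M_0 e_i\cdot e_j=\int_{\R^3\setminus T}\nabla V_i:\nabla V_j$ does not match the paper's constant. Note that $\int_{\R^3\setminus T}\nabla V_i:\nabla V_j=2\int_{\R^3_+}\nabla W_{\cdot i}:\nabla W_{\cdot j}=-2\int_{T\times\{0^+\}}\pa_3 W_{ij}=F_{ij}$, whereas the paper derives $M_{0,ij}=\tfrac{1}{8}F_{ij}$, a factor of $8$ smaller. This constant is not innocuous: it is produced by a genuine computation (differentiating the Stokeslet expansions \eqref{DA:Wij}--\eqref{DA:qj} on the sphere $\pa C_k^\eps$ of radius $\eps/4$, picking up the pressure contribution $-q^\eps_j n\cdot e_i$, applying Allaire's Dirac-mass convergence lemma, and accounting for the $\tfrac12$ from passing to the doubled domain $\tOm$). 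Asserting the limit ``as a Riemann sum converging to $-C_0\int M_0\psi_h\cdot\phi_h$'' does not produce the constant, and the step you label ``the technical heart'' --- the sharp local energy estimates that permit replacing $\psi^\eps$ by its weak limit --- is deferred rather than carried out. As written the proposal is a plausible plan faithful to the paper's method, but property (iv), which is where the matrix $M_0$ actually gets identified, is not established.
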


\begin{proof}[Proof of Lemma \ref{lemma:patches}]

This lemma is the analogue of Proposition 4.1.6 in Ref. \refcite{Allaire2} (see also section 2.3 in Ref. \refcite{Allaire1}). 
As mentioned before, we do not claim any major novelty in the proof. Nevertheless, with regards to  quantitative aspects, notably the exact expression of the slip matrix $C_0 \, M_0$, we feel necessary to reproduce its main steps.  

\medskip
The starting idea is to consider a base flow $(W,q)$ in the vicinity of $T$, which, after proper rescaling,  will  describe accurately the corrector  behavior near a single  patch. We shall  then truncate it  and periodize so as to obtain an appropriate global corrector.  
Namely, we introduce the solution $(W,q)$, with $\displaystyle W=(W_{ij})_{1\leq i,j \leq 3}$, $\displaystyle q=(q_i)_{1\leq i\leq 3}$, of the following problem:
\begin{align}
-\Delta W_{ij} + \d_i q_j = 0\quad & \textrm{in }\R^3_+,\ 1\leq i,j\leq 3,\label{eq:Stokes-demi-espace}\\
\sum_{i=1}^3\d_i W_{ij} = 0 \quad & \textrm{in }\R^3_+,\ 1\leq j\leq 3\label{eq:divergence-demi-espace},
\end{align}
completed with the boundary conditions
\begin{align}
W_{ij}=\delta_{ij} \quad & \textrm{on }T\times\{0\},\ 1\leq i,j \leq 2,\label{BC:deltaij-sur-T}\\
W_{i3}=W_{3j}=0\quad & \textrm{on }\R^2\times\{0\},\ 1\leq i,j \leq 3,\\
\d_3 W_{ij} = 0 \quad & \textrm{on }(\R^2\setminus T)\times\{0\},\ 1\leq i,j \leq 2.\label{BC:slip-sur-Tc}
\end{align}
as well as $\lim_{|x| \rightarrow \infty} W = 0$. Of course,  for $j=3$, we have  $W_{i3}\equiv 0$ and $q_3\equiv 0$. For $j = 1,2$, the existence of a unique weak solution  
$$(W_{\cdot j}, q_j)\in \left(D^{1,2}(\R^3_+)\right)^3 \times L^{2}_{loc}(\R_+^3)/\R$$
follows from Lax-Milgram theorem. Following Ref. \refcite{Galdi}, we remind that $D^{1,2}(\R^3_+)$ is the closure of ${\cal D}(\overline{\R_+^3})$ in $\dot{H}^1(\R_+^3)$.  


\medskip
{\em Asymptotic behaviour of $W_{ij}, q_j$.} For $j=1\ldots 3$, we extend $q_j$, $W_{1j}$ and $W_{2j}$ into even functions of $x_3$, and  $W_{3j}$ into an odd function of $x_3$. We obtain in this way solutions of the Stokes equations on  $\R^3\setminus T$. Proceeding exactly as in page 255 of Ref. \refcite{Allaire1}, we obtain the following asymptotic expansions
\begin{align}
W_{\cdot j}(x) \:  & = \: \frac{1}{8\pi}\, \left(\frac{F_j}{|x|} + \frac{(x\cdot F_j)x}{|x|^3}\right)+ O\left(\frac{1}{|x|^2}\right)\quad \textrm{as }|x|\ra\infty.\label{DA:Wij}
\\
q_j(x) \: & = \: \, \frac{1}{4\pi} \frac{x\cdot F_j}{|x|^3}+ O\left(\frac{1}{|x|^3}\right)\quad \textrm{as }|x|\ra\infty,\label{DA:qj}
\end{align}
In formulas \eqref{DA:Wij}-\eqref{DA:qj}, the notation $F_j$ corresponds to the drag force, which is defined by (here, $n_+ \: := \: e_3$, $n_- \: := \: -e_3$): 
\begin{equation} \label{drag}
\begin{aligned}
F_j \:  & =  \: - \int_{T \times \{0^+\}}\frac{\d W_{\cdot j}}{\d n_+} - \int_{T \times \{0^-\}}\frac{\d W_{\cdot j}}{\d n_-} + \int_{T \times \{0^+\}} q_j n_+ + \int_{T \times \{0^-\}} q_j n_- \:  \\ 
& = \:  - 2\int_{T \times \{0^+\}} \d_3 W_{\cdot j}.
\end{aligned}
\end{equation}

\medskip
{\em Construction of $W^\eps$ and $q^\eps$.}  Using the extended $W$ and $q$, defined in the whole of $\R^3$,  we can then proceed exactly as in Ref. \refcite{Allaire1,Allaire2} to construct the correctors $W^\eps$ and $q^\eps$. Therefore, we consider the following decomposition of $P_k^\eps$ (see definition \eqref{def:cube-Pk}):
\bestar
\overline{P_k^\eps}=C_k^\eps \cup \overline{D_k^\eps}\cup \overline{K_k^\eps},
\eestar
where $C_k^\eps$ is the ball of radius $\eps/4$ centered in the cube, $D_k^\eps$ is the ball of radius $\eps/2$, with same center, perforated by $C_k^\eps$, and $K_k^\eps$ is the remaining part of the cube, that is $K_k^\eps=P_k^\eps\setminus \overline{D_k^\eps}$ (see Figure \ref{fig:decomp}). We denote by $c_k^\eps$ the center of cube $P_k^\eps$. In each part of the cube, we define $W_{\cdot j}^\eps$ and $q_j^\eps$ as follows:
\bestar
\left\lbrace 
\begin{matrix}
W_{\cdot j}^\eps(x)=W_{\cdot j}(\frac{x-c_k^\eps}{\aeps})\\
q_j^\eps(x) = \frac{1}{\aeps} q_j(\frac{x-c_k^\eps}{\aeps})
\end{matrix}  \quad \forall x\in C_k^\eps, \right. ,
\left\lbrace
\begin{matrix}
\nabla q_j^\eps - \Delta W_{\cdot j}^\eps = 0\\
\dv W_{\cdot j}^\eps = 0
\end{matrix}
\right.
\quad \textrm{in }D_k^\eps,
\qquad
\left\lbrace 
\begin{matrix}
W_{\cdot j}^\eps = 0\\
q_j^\eps = 0
\end{matrix}
\right. \quad \textrm{in }K_k^\eps.
\eestar
Moreover, we impose $\int_{D_k^\eps}q_j^\eps = 0$ and $W_{\cdot j}^\eps\in H^1(P^\eps_k)^3$ (so that there is no jump of $W^\eps$ across $\pa D^\eps_k$, $\pa C^\eps_k$). 

\begin{figure}
\begin{center}
\includegraphics[scale=0.30]{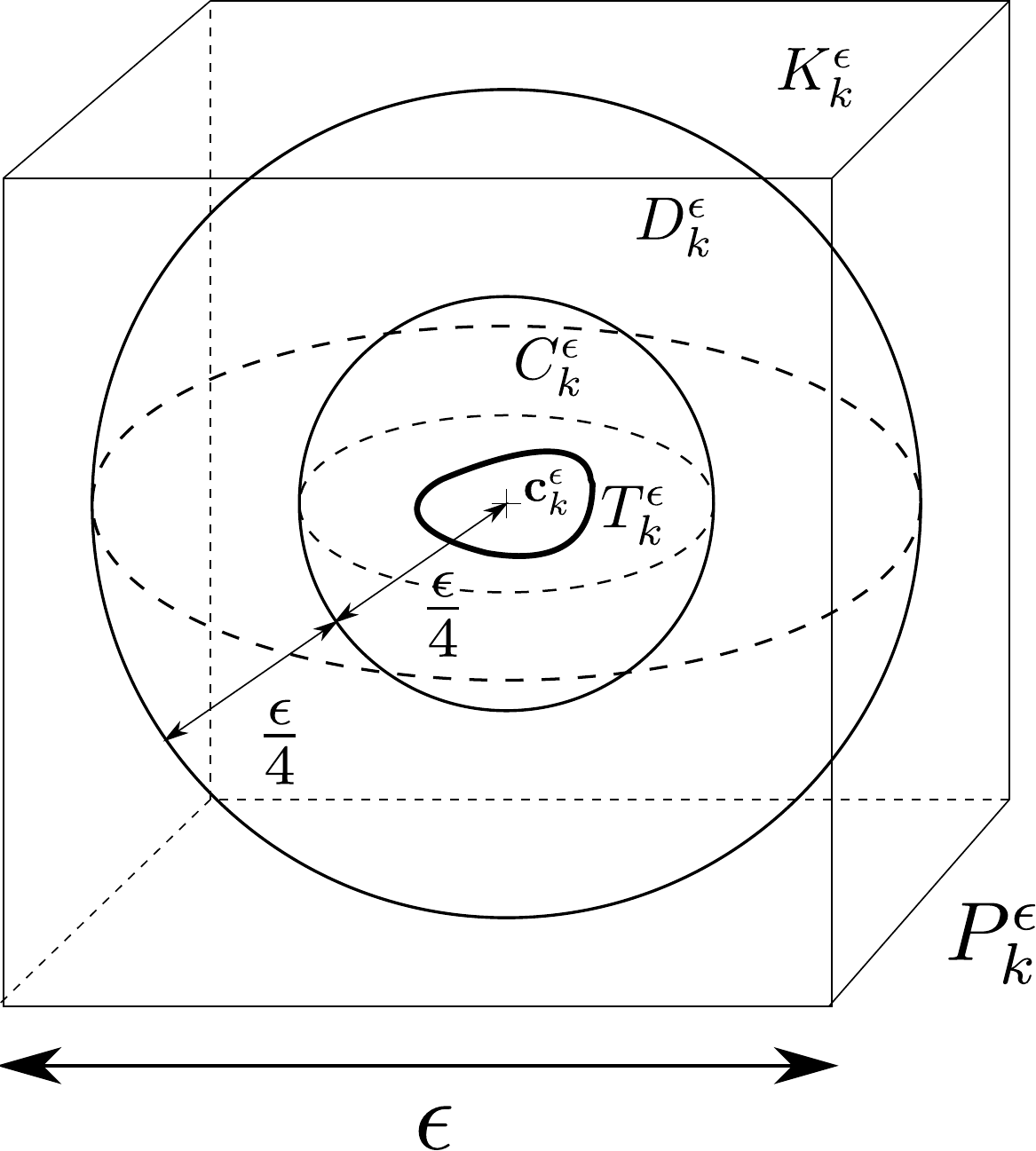}
\caption{Each cube $P_k^\eps$  is decomposed into a union of subdomains $C_k^\eps$, $D_k^\eps$ and $K_k^\eps$, which are separated by spheres of radius $\eps/4$ and $\eps/2$ centered in the cube.}
\label{fig:decomp}
\end{center}
\end{figure}

\medskip 
{\em Estimates on $W^\eps$ and $q^\eps$.} 
We use  again the decomposition $\displaystyle P_k^\eps=C_k^\eps \cup (P_k^\eps \setminus \overline{C_k^\eps})$.
The estimates in $C_k^\eps$ follow from  the asymptotic expansions \eqref{DA:Wij}-\eqref{DA:qj} and a scaling argument:  for every $\eps>0$,
$$
\|\nabla W_{\cdot j}^\eps\|_{L^2(C_k^\eps)}^2 \leq C \aeps, \ 
\| q_j^\eps\|_{L^2(C_k^\eps)}^2 \leq C \aeps, \ 
\| W_{\cdot j}^\eps\|_{L^2(C_k^\eps)}^2  \leq C \aeps^2 \eps,
$$
where $C>0$ is a constant. To treat the remaining part $P_k^\eps \setminus \overline{C_k^\eps}$, we use a properly rescaled version of standard estimates for the homogeneous Stokes equations: basically, the $L^2$ norm, resp.\,$H^1$ norm of the solution is controlled by the $L^2$ norm, resp.\,$H^{1/2}$ norm of the boundary data (see for instance Ref. \refcite{Solonnikov-Scadilov}). Since the velocity fields $W_{\cdot j}^\eps$ satisfy the following pointwise asymptotics as $\eps$ vanishes
$$
W_{\cdot j}^\eps  = O\left(\frac{\aeps}{\eps}\right)\quad \textrm{on }\d C_k^\eps \cap \d D_k^\eps,\  
\quad \nabla W_{\cdot j}^\eps  = O\left(\frac{\aeps}{\eps^2}\right)\quad \textrm{on }\d C_k^\eps \cap \d D_k^\eps,
$$
using a scaling argument,  we obtain the following estimates
\be\label{estimate:P-C}
\|\nabla W_{\cdot j}^\eps\|_{L^2(P_k^\eps \setminus \overline{C_k^\eps})}^2 \leq C \frac{\aeps^2}{\eps}, \ 
\| q_j^\eps\|^2_{L^2(P_k^\eps \setminus \overline{C_k^\eps})}  \leq C \frac{\aeps^2}{\eps}, \ 
\| W_{\cdot j}^\eps\|_{L^2(P_k^\eps \setminus \overline{C_k^\eps})}^2 \leq C \aeps^2 \eps,
\ee
for a given  constant $C>0$.
Since $0<\aeps<\eps$, we deduce 
$$
\|\nabla W_{\cdot j}^\eps\|_{L^2(P_k^\eps)}^2  \leq C \aeps, \ 
\|q_j^\eps\|_{L^2(P_k^\eps)}^2  \leq C \aeps, \ 
\| W_{\cdot j}^\eps\|_{L^2(P_k^\eps)}^2  \leq C \aeps^2 \eps.
$$
As a result, summing over $k\in [|0,\eps^{-1}|]^2$, we obtain the following asymptotics as $\eps$ vanishes
\be \label{asymptotics:Weps-Qeps}
\|\nabla W^\eps\|_{L^2(\Om)}^2=O\left(\frac{\aeps}{\eps^2}\right),\quad \|q^\eps\|^2_{L^2(\Om)}=O\left(\frac{\aeps}{\eps^2}\right), \quad 
\|W^\eps\|_{L^2(\Om)}^2=O\left(\frac{\aeps^2}{\eps}\right).
\ee

\medskip

{\em Conclusion of the proof. }
Let $\phi\in C^{\infty}(\overline{\Om})^3$, $\psi\in H^1(\Om)^3$ and let $\psi^\eps\in H^1(\Om)^3$ be a sequence of vector fields satisfying the boundary conditions  \eqref{BC:Psi-eps}, and converging weakly to $\psi$ in $H^1(\Om)^3$. In the sub-critical case $\aeps\ll \eps^2$, the asymptotics \eqref{asymptotics:Weps-Qeps} imply that
\bestar
W^\eps \ra 0\quad \textrm{strongly in }H^1(\Om)^9, \quad q^\eps \ra 0\quad \textrm{strongly in }L^2(\Om).
\eestar
Consequently, the following relation holds:
\bestar
\lim_{\eps\ra 0} \sum_{1\leq i,j\leq 3} \left(\int_{\Om} \nabla W_{ij}^\eps \cdot \nabla \psi_i^\eps \phi_j - \int_{\Om} \d_i \psi_i^\eps q_j^\eps \phi_j \right) =  0.
\eestar
Thus, relation \eqref{caracterisation:matrice-M} holds with $\overline{M}=0$. 

\medskip
In the critical case $\lim_{\eps\ra 0} \frac{\aeps}{\eps^2}=C_0>0$, we define $\tilde \Om= \T^2\times (-1,1)$, and we
extend $\phi_j$ and $\psi_j^\eps$ into even functions of $x_3$ on $\tilde \Om$ for $j=1,2$, and $\phi_3$ and $\psi_3$ into odd functions of $x_3$. 
First, asymptotics \eqref{asymptotics:Weps-Qeps} imply that $W^\eps$ is bounded in $H^1$, and therefore converges weakly in $H^1$, up to a subsequence. Since $W^\eps$ vanishes in $L^2(\Om)$, we obtain $\nabla W^\eps\rau 0$ weakly in $L^2(\Om)^9$.
From \eqref{asymptotics:Weps-Qeps}, we also infer $q_j^\eps\rau 0$ weakly in $L^2(\Om)$, and thus the following identity holds for every $1\leq i \leq 3$, $1\leq j\leq 2$
\begin{align*}
 \int_{\Om} \nabla W_{ij}^\eps \cdot \nabla \psi_i^\eps \phi_j - q_j^\eps (\d_i\psi_i^\eps)  \phi_j
& = \frac{1}{2} \int_{\tOm} \nabla W_{ij}^\eps \cdot \nabla \psi_i^\eps \phi_j - q_j^\eps (\d_i\psi_i^\eps)  \phi_j
\\
& =\frac{1}{2}  \int_{\tOm} \nabla W_{ij}^\eps \cdot \nabla (\psi_i^\eps \phi_j) - q_j^\eps \d_i(\psi_i^\eps \phi_j)
 + o(1),\quad \textrm{as }\eps\ra 0.
\end{align*}
Moreover,
\begin{align*}
& \int_{\tOm} \nabla W_{ij}^\eps \cdot \nabla (\phi_j \psi_i^\eps) -  q_j^\eps \d_i(\phi_j \psi_i^\eps ) 
\/ = \: \sum_{k} \int_{P_k^\eps} \nabla W_{ij}^\eps \cdot \nabla (\phi_j \psi_i^\eps) -  q_j^\eps \d_i(\phi_j \psi_i^\eps)  \\
= & \sum_{k} \int_{C_k^{\eps,+}} \nabla W_{ij}^\eps \cdot \nabla (\phi_j \psi_i^\eps) - q_j^\eps \d_i(\phi_j \psi_i^\eps)  \\
+ & \sum_{k}  \int_{C_k^{\eps,-}} \nabla W_{ij}^\eps \cdot \nabla (\phi_j \psi_i^\eps) - q_j^\eps \d_i(\phi_j \psi_i^\eps) 
+\sum_{k}  \int_{P_k^\eps \setminus C_k^\eps} \nabla W_{ij}^\eps \cdot \nabla (\phi_j \psi_i^\eps) - q_j^\eps \d_i(\phi_j \psi_i^\eps) ,
\end{align*}
where $C_k^{\eps,\pm}= C_k^\eps \cap \R^3_\pm$. In all sums, $k$ ranges over $[|0,\eps^{-1}|]^2$. Using the estimates \eqref{estimate:P-C}, we infer that
\begin{eqnarray*}
\sum_{k}  \int_{P_k^\eps \setminus C_k^\eps} \nabla W_{ij}^\eps \cdot \nabla (\phi_j \psi_i^\eps) - q_j^\eps \d_i(\phi_j \psi_i^\eps) &\leq& C\left( \| \na W^\eps\|_{L^2(\cup_k P_k^\eps \setminus C_k^\eps)}+ \|q^\eps\|_{L^2(\cup_k P_k^\eps \setminus C_k^\eps)}\right)\\
&\leq& C \left(\frac{\aeps^2}{\eps} \frac{1}{\eps^2}\right)^{1/2}\ll 1.
\end{eqnarray*}
At this stage the proof differs slightly from the one of Ref. \refcite{Allaire2}, because of the mixed boundary conditions at $x_3=0$. Indeed, since $(W^\eps, q^\eps)$ satisfies the Stokes system in $C^{\eps,\pm}_k$, we have
$$
\int_{C_k^{\eps,\pm}} \nabla W_{ij}^\eps \cdot \nabla (\phi_j \psi_i^\eps) - \int_{C_k^{\eps,\pm}}q_j^\eps \d_i(\phi_j \psi_i^\eps) = \int_{\d C_k^{\eps,\pm}}
\left( \frac{\d W_{ij}^\eps}{\d n} - q_j^\eps n\cdot e_i \right) \phi_j \psi_i^\eps,
$$
where $n$ denotes the outer normal to the set $C_k^{\eps,\pm}$. In particular, due to the symmetry properties of $W^\eps,q^\eps, \phi, \psi^\eps$, there holds
\begin{align*}
&\int_{C_k^{\eps,+}} \nabla W_{ij}^\eps \cdot \nabla (\phi_j \psi_i^\eps) - \int_{C_k^{\eps,+}}q_j^\eps \d_i(\phi_j \psi_i^\eps) 
+ \int_{C_k^{\eps,-}} \nabla W_{ij}^\eps \cdot \nabla (\phi_j \psi_i^\eps) - \int_{C_k^{\eps,-}}q_j^\eps \d_i(\phi_j \psi_i^\eps) \\
=&\int_{\pa C_k^{\eps}}
\left( \frac{\d W_{ij}^\eps}{\d n} - q_j^\eps n\cdot e_i \right) \phi_j \psi_i^\eps - 2 \int_{C_k^\eps\cap\{z=0\}} \pa_3 W_{ij}^\eps \phi_j \psi_i^\eps.
\end{align*}
By definition of $W$, $\: \pa_3 W_{ij}^\eps =0$ on $(C_k^\eps\cap\{x_3=0\})\setminus( {\cal T}^\eps \times\{0\})$. On the other hand, since $\psi^\eps$ satisfies \eqref{BC:Psi-eps}, $\psi_i^\eps=0$ on ${\cal T}^\eps \times\{0\}$. Therefore, the r.h.s.\,reduces to the integral on $\pa C^\eps_k$. From now on, in order to avoid confusion, we denote by $n_k$ the normal vector to  the ball $C^\eps_k$.
Using the asymptotic expansions \eqref{DA:Wij}-\eqref{DA:qj} and the expression of $W_{ij}^\eps, q_j^\eps$ in $C_k^\eps$, we obtain, on $\pa C_k^\eps$,
\begin{equation*}
\frac{\d W_{ij}^\eps}{\d n_k} - q_j^\eps n_k\cdot e_i \: = \: -\frac{\aeps}{\eps^2}\left[\frac{2}{\pi} F_{ij} +\frac{6}{\pi}   e_i\cdot n_k F_j\cdot n_k\right]\\
\: + \: \left(\frac{\aeps}{\eps^2}\right)^2 \eps R_{ij}^\eps,
\end{equation*}
where 
$R_{ij}^\eps$ is a function of $x$, satisfying $R_{ij}^\eps(x)=O(1)$ as $\eps\ra 0$, uniformly in $x$ and $k$. This leads to the following decomposition
\begin{align*}
\int_{\tOm} \nabla W_{ij}^\eps \cdot \nabla (\phi_j \psi_i^\eps) - \int_{\tOm} q_j^\eps \d_i(\phi_j \psi_i^\eps)
& = -  \frac{\aeps}{\eps^2} \sum_{k}  \int_{\d C_k^\eps}
\left[ \frac{2}{\pi} F_{ij} + \frac{6}{\pi} e_i\cdot n_k F_j\cdot n_k\right] \phi_j \psi_i^\eps \\&+   \left( \frac{\aeps}{\eps^2} \right)^2 \sum_{k}  \int_{\d C_k^\eps} \eps R_{ij}^\eps
\phi_j \psi_i^\eps+o(1).
\end{align*}
Let $\delta_{\d C_k^\eps}$ be the unit mass concentrated on $\d C_k^\eps$. We  use the following Lemma, proved by Allaire\footnote{Notice that in the paper of Allaire, the periodicity of the pattern is $2\eps$, rather than $\eps$ as in the present paper. Hence the constant in front of the Dirac mass in the right-hand side is $\pi/16$, rather than  $\pi/64$ for the first line, and $\pi/48$ rather than $\pi/192$ in the second line.}:
\begin{lemma}[see Lemma 4.2.1 in Ref \refcite{Allaire2}]
\be \label{convergence:Dirac-mass}
\begin{aligned}
\sum_{k} \delta_{\d C_k^\eps} \ra \frac{\pi}{16}\delta_{\T^2\times \{0\}}\quad \textrm{strongly in }H^{-1}(\tilde{\Om}),\\
\sum_{k} e_i \cdot  n_k n_k\delta_{\d C_k^\eps} \ra \frac{\pi}{48}e_i\delta_{\T^2\times \{0\}}\quad \textrm{strongly in }H^{-1}(\tilde{\Om}).
\end{aligned}
\ee
\end{lemma}

Let us write
\be \label{identity:Dirac-decomposition}
\begin{aligned}
\sum_{k}  \int_{\d C_k^\eps}
\phi_j \psi_i^\eps=\left\langle \sum_{k\in \Kc^\eps} \delta_{\d C_k^\eps}, \phi_j \psi_i^\eps \right\rangle_{H^{-1}(\tilde{\Om})\times H^1_0(\tilde{\Om})},\\
\sum_{k}  \int_{\d C_k^\eps}
e_i \cdot  n_k n_k\phi_j \psi_i^\eps=\left\langle \sum_{k} e_i \cdot  n_k n_k\delta_{\d C_k^\eps}, \phi_j \psi_i^\eps \right\rangle_{H^{-1}(\tilde{\Om})\times H^1_0(\tilde{\Om})};
\end{aligned}
\ee 
consequently, since $\phi_j \psi_i^\eps\ra \phi_j \psi_i$ weakly in $H^1(\tilde{\Om})$ and $\frac{\aeps}{\eps^2}\ra C_0$, we obtain 
\be \label{limite:1st-Dirac}
\lim_{\eps\ra 0} \frac{\aeps}{\eps^2} \sum_{k}  \int_{\d C_k^\eps}
\left[ \frac{2}{\pi} F_{ij} + \frac{6}{\pi} F_j\cdot n_k n_k\cdot e_i\right] \phi_j \psi_i^\eps= \frac{1}{4}C_0\int_{\T^2\times\{0\}} F_{ij} \phi_j \psi_i.
\ee
Moreover, since $R_{ij}^\eps$ is uniformly bounded in $L^\infty(\tilde{\Om})$, we use  the following comparison principle:
\begin{lemma}[see Lemma 2.3.8 in Ref. \refcite{Allaire1}]
Let $\alpha_\eps$ and $\beta_\eps$ be two positive functions in $H^{-1}(\Om)$ such that
$$0\leq \alpha_\eps\leq \beta_\eps.$$
If $\beta_\eps$ converges strongly to zero in $H^{-1}(\Om)$, then so does $\alpha_\eps$.
\end{lemma}
Whence we deduce from \eqref{convergence:Dirac-mass} that
\bestar
\eps \sum_{k} R_{ij}^\eps \delta_{\d C_k^\eps} \ra  0\quad \textrm{strongly in }H^{-1}(\tilde{\Om}).
\eestar
Using \eqref{identity:Dirac-decomposition}, we obtain the following convergence:
\be \label{limite:2nd-Dirac}
\left(\frac{\aeps}{\eps^2}\right)\sum_{k}  \int_{\d C_k^\eps} \eps R_{ij}^\eps \phi_j \psi_i^\eps\ra 0\quad\textrm{as }\eps \ra 0.
\ee
Gathering the convergence results \eqref{limite:1st-Dirac}-\eqref{limite:2nd-Dirac}, we obtain relation \eqref{caracterisation:matrice-M}, where the matrix $M_0$ is defined by 
\begin{equation} \label{M0}
M_{0,ij} \: = \:  \frac{1}{8} F_{ij}, \quad F_{\cdot j}  \:\: \mbox{ given by  \eqref{drag}} 
\end{equation}

\medskip
There only remains to prove that the matrix $(F_{ij})_{1\leq i,j\leq 2}$ is negative definite. To that end, we   go back to system \eqref{eq:Stokes-demi-espace}-\eqref{BC:slip-sur-Tc}. We multiply  by $W_{\cdot i}$ the system satisfied by $W_{\cdot j}$, and we obtain
$$
F_{ij}= -2 \int_{T \times \{0\}} \pa_3 W_{ij}= -2 \int_{\R^3_+}\na W_{\cdot i} : \na W_{\cdot j}.
$$
In particular, for all $\eta \in \R^2$,
$$
\sum_{1\leq i,j\leq 2} \eta_i \eta_j F_{ij}= -2 \int_{\R^3_+} \left| \na (\eta_1 W_{\cdot 1} + \eta_2 W_{\cdot 2})\right|^2\leq 0,
$$
and the right-hand side above vanishes if and only if $\eta_1 W_{\cdot 1} + \eta_2 W_{\cdot 2}=0$ a.e. in $\R^3_+$. In view of the boundary conditions \eqref{BC:deltaij-sur-T}, this implies $\eta_1=\eta_2=0$.
 This concludes the proof of Lemma \ref{lemma:patches}.

\end{proof}
\medskip

To complete the proof of Theorem \ref{thm:patch}, we rely on Lemma \ref{lemma:patches}, as follows.
Let $\phi\in C^{\infty}(\overline{\Om}^3)$ satisfying the no-slip condition $\phi=0$ on the upper boundary $\T^2\times\{1\}$, and the non-penetration condition $\phi_3=0$ on the lower boundary $\T^2\times\{0\}$. Let $W^\eps\in H^1(\Om)^9$, $q^\eps\in L^2(\Om)^3$ be the sequences introduced in Lemma \ref{lemma:patches}. We define the following test functions for the weak formulation associated to system \eqref{eq:Stokes}-\eqref{mixed-z=0}:
\bestar
\phi^\eps = (\mathrm{I}_3 - W^\eps)\phi,\quad r^\eps = q^\eps \phi,
\eestar
where $\mathrm{I}_3$ is the identity matrix in $\Mc_3(\R)$.
We deduce the following relation:
\begin{align}
\int_{\Om} \nabla u^\eps : \nabla \phi^\eps - \int_{\Om} p^\eps \dv \phi^\eps  & = \int_{\Om} f \phi^\eps \label{weak-formulation-velocity}\\
\int_{\Om} r^\eps \dv u^\eps & =0. \label{weak-formulation-pressure}
\end{align}
Since $W^\eps$ converges weakly to $0$ in $H^1(\Om)^9$, and strongly to $0$ in $L^2(\Om)^9$, we readily obtain
\begin{align*}
\int_{\Om} \nabla u^\eps : \nabla \phi^\eps & = \int_{\Om} \nabla u^\eps : \nabla \phi - \int_{\Om} \nabla u_i^\eps \nabla W_{ij}^\eps \phi_j - \int_{\Om} \nabla u_i^\eps \nabla  \phi_j W_{ij}^\eps\\
& = \int_{\Om} \nabla \overline{u} : \nabla \phi - \int_{\Om} \nabla u_i^\eps \nabla W_{ij}^\eps \phi_j+o(1),\quad \textrm{as }\eps\ra 0,\\
-\int_{\Om}p^\eps \dv \phi^\eps & = - \int_{\Om}p ^\eps (\mathrm{I}_3-W^\eps):\nabla \phi \: = \: -\int_{\Om}\overline{p}\ \dv \phi + o(1),\quad \textrm{as }\eps\ra 0.
\end{align*}
Consequently, summing relations \eqref{weak-formulation-velocity} and \eqref{weak-formulation-pressure}, we deduce the  asymptotic relation
\bestar
\int_{\Om} \nabla \overline{u}:\nabla\phi - \int_{\Om}\overline{p}\ \dv \phi - \int_{\Om} \nabla u_i^\eps \nabla W_{ij}^\eps \phi_j + \int_{\Om} q^\eps \phi\ \dv u^\eps = o(1),\quad \textrm{as }\eps\ra 0.
\eestar
Applying Lemma \ref{lemma:patches} with $\psi=\overline{u}$ and $\psi^\eps=u^\eps$, we obtain the following relation:
\be\label{weak-formulation-ubarre}
\int_{\Om} \nabla \overline{u}:\nabla\phi - \int_{\Om}\overline{p}\ \dv \phi+C_0\int_{\T^2\times\{0\}}M_0 u \cdot \phi = 0,
\ee
where the matrix  $M_0\in\Mc_{2}(\R)$ is defined by \eqref{M0}.
Since relation \eqref{weak-formulation-ubarre} holds for every test function $\phi$, this proves that $\overline{u}=\overline{u}_{C_0M_0}$.


\begin{remark}
Theorem 1 expresses that  the homogenized boundary condition depends strongly on the ratio between  slip and no-slip areas. By simple symmetry, the velocity can be extended though the  \textit{planar} slip zones into a Stokes solution satisfying Dirichlet conditions at the remaining part of the boundary.
In this way, the problem becomes very similar to the one raised by Allaire in Section 4 of  Ref. \refcite{Allaire2} on fluid flows through porous grids. In this respect,  it is different  from article  \refcite{AllaireSlip91} where Allaire considers slip conditions on \textit{volumic} obstacles (for which an extension like the one mentioned above cannot be performed).
%

\end{remark}


\section{Asymptotic study of ``riblet'' designs}
\label{sec:riblet}

This section is devoted to the proof of Theorem \ref{thm:riblet}. 
In the case of riblets,  we recall that ${\cal T}^\eps$ is invariant by translation in $x_1$. Since $f=(f_1,f_2,f_3)$ is also independent on the $x_1$ variable, the solution $(u^\eps,p^\eps)$ of system \eqref{eq:Stokes}-\eqref{noslip-z=1}-\eqref{mixed-z=0} depends only on $(x_2,x_3)$. As a result, the first component of $u^\eps$ satisfies:
\be \label{Laplace2d}
\begin{aligned}
-\Delta_{2,3} u_1^\eps & = f_1  \quad \textrm{in }\T \times(0,1),\\
u_1^\eps & = 0\quad \textrm{on }\T \times\{1\},\\
\d_3 u_1^\eps = 0\ \textrm{on }(\T \times\{0\})\setminus (\Pi {\cal T}^\eps),\quad u_1^\eps & =0 \ \textrm{on }\Pi  {\cal T}^\eps,
\end{aligned}
\ee
where $\nabla_{2,3}$ and $\Delta_{2,3}$ stand for the gradient (resp.\,the Laplacian) with respect to the $(x_2,x_3)$ variables, $\T^1=\R/\Z $ and where we have denoted $\Pi$ the projection operator defined by $\Pi(x_1,x_2,0)=(x_2,0)$.
In the same fashion, $(u^\eps_2,u^\eps_3), p^\eps$ satisfy the following Stokes problem:
\be \label{Stokes2d}
\begin{aligned}
-\Delta_{2,3} \left( \begin{matrix}
u_2^\eps\\ u_3^\eps
\end{matrix} \right)
+ \nabla_{2,3} p^\eps & = \left(
\begin{matrix}
f_2 \\ f_3
\end{matrix}
\right)  \quad \textrm{in }\T\times(0,1), \\
\nabla_{2,3}\cdot \left( \begin{matrix}
u_2^\eps\\ u_3^\eps
\end{matrix} \right)  & = 0 \quad \textrm{in }\T\times(0,1),\\
u_2^\eps=u_3^\eps & = 0\quad \textrm{on }\T\times\{1\},\\
u^\eps_3 &= 0\quad \textrm{on }\T\times\{0\},\\
\pa_{3} u_{2}^\eps=0\text{ on }(\T\times\{0\}) \setminus (\Pi T^\eps), \quad u^\eps_{2|x_3=0} &=0\quad \textrm{on } \Pi{\cal T}^\eps.
\end{aligned}
\ee
Hence, the original 3d problem reduces to the study of two independent systems (with a Laplace and a Stokes equations), set in the 2d domain $\T \times (0,1)$. This change from a 3d to a 2d setting explains the change of scalings between Theorem \ref{thm:patch} and Theorem \ref{thm:riblet}.


\medskip 
{\em To handle the Stokes equations \eqref{Stokes2d}}, we proceed like in the previous section: in short,  we adapt the homogenization techniques of Ref. \refcite{Allaire1,Allaire2}, dedicated to  the Stokes flow across a periodic network of balls,  set along an hypersurface. As mentioned before, the difference is the dimension of the domain. One must this time consider the 2d results of Ref. \refcite{Allaire2}, about periodic network of disks along a line.  For brevity, we do not give further details. We eventually obtain the following limit system:
\be \label{limit-Stokes2d}
\begin{aligned}
-\Delta_{2,3} \left( \begin{matrix}
\overline{u}_2\\ \overline{u}_3
\end{matrix} \right)
+ \nabla_{2,3} \overline{p}  = \left(
\begin{matrix}
f_2 \\ f_3
\end{matrix}
\right)  \quad \textrm{in }\T^1\times(0,1), \\
\nabla_{2,3}\cdot \left( \begin{matrix}
\overline{u}_2\\ \overline{u}_3
\end{matrix} \right)  = 0 \quad \textrm{in }\T^1\times(0,1),\\
\overline{u}_{2}=\overline{u}_{3} = 0\quad \textrm{on }\T^1\times\{1\},\\
\overline{u}_3=0\text{ on }\T^1\times\{0\},\\
\pa_{3} \overline{u}_{2}=\frac{2\pi}{C_0}\overline{u}_{2}\textrm{ on }\T^1\times\{0\},
\end{aligned}
\ee
where we recall that $C_0:=\lim_{\eps \to 0} - \eps \ln |\aeps|$.
\medskip
{\em As regards the Laplace equation \eqref{Laplace2d}}, the idea is exactly the same. Actually, the situation is even simpler, and has been analysed for a longer time. Namely, one may start from the work of Cioranescu and  Murat (see Ref. \refcite{terme-etrange}), instead of section 4 in Ref. \refcite{Allaire1}. Again, we leave the details to the reader. In our setting, the limit system is  
\be \label{limit-Laplace2d}
\begin{aligned}
-\Delta_{2,3} \bar{u}_1 = f_1  \quad \textrm{in }\T^1\times(0,1),\\
\bar{u}_1 = 0\quad \textrm{on }\T^1\times\{1\},\\
\d_3 \bar{u}_1 = \frac{\pi}{C_0}  \bar{u}_1 \ \textrm{on }\T^1\times\{0\}.
\end{aligned}
\ee

\medskip
We deduce from systems \eqref{limit-Laplace2d} and \eqref{limit-Stokes2d} that $\overline{u}=\overline{u}_{M_{riblets}}$, $M_{riblets}$ being given by \eqref{Mparallel}. The sub-cases where $f=e_1$ or $f=e_2$ follow easily.


\section{Numerical simulations}
\label{sec:simul}
This section is devoted to simulations of system  \eqref{eq:Stokes}-\eqref{noslip-z=1}-\eqref{mixed-z=0}. 
For simplicity, we shall restrict to constant source term (average pressure gradient), say 
$$ f \: = \: 2 e, \quad e \in \mbox{span}(e_1,e_2). $$
The idea is to recover numerically the scalings for the slip length given in Theorems 1 and 2. 
However, to observe significant slip implies to consider very small scales: patches of size less than $\eps^2$, in a grid of side $\eps$. This forbids direct computations. To overcome this difficulty, we shall rely on a boundary layer approximation of the Stokes flow. Such approximation, often implicitly used in physics papers, has been fully justified in the context of wall laws: see References \refcite{JaMi,DaGe,AmBrLe} among many others. 
 
\medskip
The starting point is to write the exact solution $u^\eps$ as 
$$ u^\eps(x) \: = \:  u^P(x) \: + \: \eps v^\eps(x/\eps)  $$ 
where $u^P$ is the reference Poiseuille flow, satisfying \eqref{eq:Stokes} with Dirichlet condition at both planes. Remind that
$$ u^P(x)  \: = \:  - x_3 (x_3-1) e.  $$
Hence, $v^\eps =( v^\eps_h(y), v^\eps_3(y))$ satisfies 
\begin{equation} 
\begin{aligned}
-\Delta v + \na p & = 0, \quad \mbox{in } \: \T^2 \times (0,\eps^{-1}), \\
 \div v & = 0, \quad \mbox{in } \: \T^2 \times (0,\eps^{-1}), \\
 v & = 0,  \quad y_3 = \eps^{-1}, \\
 v_3  & = 0,  \quad y_3 = 0,  \\
  v_h = 0,   \quad y \in \eps^{-1} T^\eps \times \{0\},  \quad \pa_{y_3} v_h & = - e, \quad   y \in \eps^{-1} (T^\eps)^{c} \times \{0\} \\
 \end{aligned}
 \end{equation}
 Note that no approximation has been made so far. It is then tempting to put the roof $y_3 = \eps^{-1}$ at infinity replacing $\T^2 \times (0,\eps^{-1})$ by $\T^2 \times \R_+$. However, it is  well-known that the resulting problem is overdetermined. Namely, the boundary layer field $v^{\eps,bl}$ satisfying 
 \begin{equation}  \label{BL}
\begin{aligned}
-\Delta v + \na p & = 0, \quad \mbox{in } \: \T^2 \times \R_+, \\
 \div v & = 0, \quad \mbox{in } \: \T^2 \times \R_+, \\
 v_3  & = 0,  \quad y_3 = 0,  \\
  v_h = 0,   \quad y \in \eps^{-1} T^\eps \times \{0\},  \quad \pa_{y_3} v_h & = - e, \quad   y \in \eps^{-1} (T^\eps)^{c} \times \{0\} \\
 \end{aligned}
 \end{equation}
 has constant horizontal average: 
 $$ v^{\eps,\infty}_h \: := \:   \int_{\T^2} v^{\eps,bl}_h(y) dy_1 dy_2 $$
 with respect to $y_3$. More precisely, it can be shown that 
 $$ v^{\eps,bl} \: \rightarrow \:  (v^{\eps,\infty}_h, 0) $$
 exponentially fast as $y_3$ goes to infinity. Furthermore, by linearity of \eqref{BL}, one may denote $v^{\eps,\infty}_h \: = \:  V^{\eps,\infty} \, e$ for a two by two matrix  $V^{\eps,\infty}$. Then,  one can show that $V^{\eps,\infty}$ is symmetric positive definite, with
 $$ V^{\eps,\infty} \, e \cdot e = \int_{\T^2 \times \R_+} |\na v^{\eps,bl}|^2. $$
Note that everything depends on $\eps$, through the rescaled domain $\eps^{-1} T^\eps$ in \eqref{BL}. 

\medskip
To correct the "boundary layer constant" at infinity, one must add a macroscopic Couette flow. One ends up with 
\begin{equation*}
 u^{\eps} \: \approx \:  u^P(x) \: + \: \eps v^{\eps,bl}(x/\eps) \: - \: \eps x_3 (V^{\eps,\infty} e, 0)  
\end{equation*}
Averaging in the small scale, we find 
$$ u^{\eps}_{h}\vert_{x_3 = 0}  \: \approx \: \eps V^{\eps,\infty} e, \quad \pa_3 u^\eps\vert_{x_3 = 0} \: \approx \: \pa_3 u^P\vert_{x_3 = 0}  \: \approx \:  e. $$ 
We end up with the approximate boundary condition 
\begin{equation} \label{approxBC}
u^\eps_h \: = \: \eps V^{\eps, \infty}  \pa_3 u^\eps_h \quad \mbox{ at } \: x_3 = 0.
\end{equation} 
 
 \medskip
On the basis of the previous reasoning, one can implement the following strategy for the numerical computation of the slip length: 
\begin{itemize}
\item Compute numerically (say with $e = e_1$ and $e=e_2$) the solution of \eqref{BL}, in order to determine the matrix $V^{\eps, \infty}$. 
\item Check for the asymptotics of $\eps V^{\eps,\infty}$, for various  shapes and sizes of the no-slip zone $T^\eps$.  This allows to make the comparison with theoretical results of Theorems 1 and 2. Indeed, sending $\eps$ to zero in  \eqref{approxBC} yields 
\be\label{Lim:epsVeps} 
\bar{u}_h \: = \: \lim_{\eps \rightarrow 0} \left(\eps V^{\eps, \infty} \right)  \pa_3 \bar{u}_h \quad \mbox{ at } \: x_3 = 0, 
\ee
so that the matrix $M$ in the theorems satisfies $M^{-1} =   \lim_{\eps \rightarrow 0} \left(\eps V^{\eps, \infty} \right)$. 
\end{itemize}

\paragraph{Numerical approximation of the matrix $\Vepsinf$.}

In the numerical simulations, we will solve the system \eqref{BL}
 associated to different shapes of the no-slip zone $T^\eps$: circular or rectangular patches, and riblets parallel or orthogonal to the flow. Let us first notice that for such configurations, the matrix $\Vepsinf$ is diagonal. Indeed, since the domain $\eps^{-1}T^\eps$ is symmetric with respect to the axis $\{y_2=1/2\}$, if we denote by $v$ the solution to system \eqref{BL} with $e=e_1$, then the vector field $v^*$ defined by  
$
v_i^*(y_1,y_2,y_3)=v_i(y_1,1-y_2,y_3)
$, for $i=1,3$, and by 
$
v_2^*(y_1,y_2,y_3)=-v_2(y_1,1-y_2,y_3),
$
 is also a solution. By uniqueness, we deduce that $v_2(y_1,1-y_2,y_3)=-v_2(y_1,y_2,y_3)$ for a.e.\,$(y_1,y_2,y_3)\in \T^2\times \R_+$, which yields $\Vepsinf e_1\cdot e_2=0$. By symmetry of $\Vepsinf$, we obtain also that $\Vepsinf e_2\cdot e_1=0$

Consequently  the boundary conditions satisfied by the  horizontal components of the approximate solution to system \eqref{eq:Stokes}-\eqref{noslip-z=1}-\eqref{mixed-z=0} on $x_3=0$, simply writes: 
\begin{equation} \label{approxBCi}
u^\eps_i \: = \: \eps (\Vepsinf e_i\cdot e_i)\  \pa_3 u^\eps_i \quad \mbox{ at } \: x_3 = 0,\quad \mbox{for }i=1,2.
\end{equation} 
In the rest of this section, for $i=1,2$, the quantity $\Vepsinf e_i\cdot e_i$ will be refered to as the \emph{average slip length} associated to our problem, in the direction $e_i$.

To compute an approximate value of the average slip length associated to system \eqref{BL}, we consider a truncated domain $\T^2\times (0,H)$, for a given $H>0$, and we introduce the solution $w$ to the following problem:
\begin{equation}  \label{BLapprox}
\begin{aligned}
-\Delta w + \na q & = 0, \quad \mbox{in } \: \T^2 \times (0,H), \\
 \div w & = 0, \quad \mbox{in } \: \T^2 \times (0,H), \\
 \d_{y_3} w - q\ e_3 & = 0,\quad y_3 = H,  \\
 w_3  & = 0,  \quad y_3 = 0,  \\
  w_h = 0,   \quad y \in \eps^{-1} T^\eps \times \{0\},  \quad \pa_{y_3} w_h & = - e, \quad   y \in \eps^{-1} (T^\eps)^{c} \times \{0\} \\
 \end{aligned}
 \end{equation}
 
 Using arguments developed in Ref. \refcite{JaMiNe}, the difference between $v^{\eps,bl}$ and $w$ can be estimated as follows. First, we claim that $v^{\eps,bl}$ satisfies the following $H^1$ bound:
 \be\label{borne-H1}
 \|\na v^{\eps,bl}\|_{L^2(\T^2\times \R_+)}\leq C \sqrt{\frac{\eps}{\aeps}},
 \ee
 where $C$ is a constant which does not depend on $\eps$. This bound follows from a quantitative trace inequality, whose proof is similar to the one of \eqref{ineq:super-critical}: there exists a constant $C>0$ such that for all $b_\eps\in (0,1)$, for all $v\in H^1(\T^2\times (0,1))$ such that $v_{|x_3=0}$ vanishes on a ball of radius $b_\eps$,
 $$
 \| v_{|x_3=0}\|_{L^2(\T^2)}\leq \frac{C}{\sqrt{b_\eps}} \| \na v\|_{L^2(\T^2\times (0,1))}.
 $$
 Then, we decompose $v^{\eps,bl}$ into horizontal Fourier series and we derive exponential decay bounds: for all $s\in \N$, there exists a constant $\gamma_s>0$, which does not depend  on $\eps$, such that
 \be\label{exp-bounds}
 \begin{aligned}
 \| v^{\eps,bl}(\cdot, x_3)- (V^{\eps,\infty} \, e ,0)\|_{L^2(\T^2)}\leq C \sqrt{\frac{\eps}{\aeps}}\exp(-\gamma_0 x_3),\\
 \sum_{\alpha\in\N^3,|\alpha|\leq s} \|\na^\alpha  v^{\eps,bl}(\cdot, x_3)\|_{L^2(\T^2)}\leq C \sqrt{\frac{\eps}{\aeps}}\exp(-\gamma_s x_3).
 \end{aligned}
 \ee
 As a consequence, $v^{\eps,bl}$ is a solution of \eqref{BLapprox} in $\T^2\times(0,H)$, with a slightly modified condition at $y_3=H$, namely
 $$\begin{aligned}
 \d_{y_3}v^{\eps,bl} - p^{\eps,bl}\ e_3 =F^\eps \quad \text{at }y_3=H,\\
 \text{with}\quad \|F^\eps\|_{H^s(\T^2)}\leq C \sqrt{\frac{\eps}{\aeps}}\exp(-\gamma_s H) \ \forall s\in \N\quad \text{and}\quad \int_{\T^2} F^\eps=0.
 \end{aligned}
$$
 It follows that there exist constants $C,\gamma>0$ such that
 $$
 \|\na (v^{\eps,bl}- w)\|_{L^2(\T^2\times (0,H))}^2\leq C\frac{\eps}{\aeps} \exp(-\gamma H).
 $$
 Notice also that $w$, as $v^{\eps,bl}$, has constant horizontal average and that 
 $$
 \int_0^H\int_{\T^2}|\na w|^2=\int_{\T^2} w (y) \:dy_1\:dy_2.
 $$
 
We solve problem \eqref{BLapprox} by a finite element method. We use $P_2$ elements for the velocity and $P_1$ elements for the pressure. The three-dimensional mesh of the fluid domain $\T^2\times (0,H)$ is obtained by a constrained Delaunay tetrahedralization. 
The incompressibility condition is treated by a Lagrange multiplier (see Ref. \refcite{GiraultRaviart1986}, Ref. \refcite{ItoKunisch2008}).

Given two approximate solutions $w^1_{app}, w^2_{app}$ of system \eqref{BLapprox}, associated respectively to $e=e_1$ and $e=e_2$,  we define the numerical approximation $\Vepsinf_{app}$ of the matrix $\Vepsinf$, by the following formula: 
\bestar
\Vepsinf_{app} e_i\cdot e_j:=\int_{\T^2} w^i_{\textit{app}}(y_1,y_2,H)\cdot e_j\ d y_1 d y_2,\quad \mbox{for }i,j=1,2.
\eestar
By analogy with formula \eqref{approxBCi}, for $i=1,2$, the \emph{approximate average slip length} in direction $e_i$ is then defined by $ \Vepsinf_{app} e_i\cdot e_i $.

Finally,  we  introduce the \emph{solid fraction} $\phiseps$, which is defined by the relative area of the no-slip zone $T^\eps$ in the elementary square of size $\eps$ (or equivalently, by the area of the rescaled no-slip domain $\eps^{-1}T^\eps$). Using definitions \eqref{DefPatches}-\eqref{DefRiblets}, $\phiseps$ is given by the following expressions:
\begin{itemize}
\item in the case of patches, $\disp\phiseps=\left(\frac{\aeps}{\eps}\right)^2|T|$, where $|T|$ stands for the area of the domain $T$;
\item in the case of riblets, $\disp\phiseps=\frac{\aeps}{\eps}|I|$, where $|I|$ stands for the length of the interval $I$.
\end{itemize}
Notice that system \eqref{BL} is completely determined by $\phiseps$  and by the domain $T$ (in the case of patches) or the union of intervals $I$ (in the case of riblets).

\paragraph{Computation of the average slip length, in the case of patches.}

In the case of patches, we have plotted $ \Vepsinf_{app} e_1\cdot e_1 $ against $1/\sqrt{\phiseps}$, considering circular and squared patches (see Figure \ref{Fig:patches-disks-squares}). We observe that the dependency is affine, and a linear regression gives the relation $\Vepsinf_{app} e_1\cdot e_1 \approx \alpha/\sqrt{\phiseps} + \beta$, with $\alpha=0.322$, $\beta = - 0.429 $ in the case of the disk, and $\alpha = 0.311$, $\beta = - 0.422 $ in the case of the square. Note that these coefficients are very close to the ones obtained by Ybert \emph{et al.} in Ref. \refcite{Ybert}. Consequently, since $\lim_{\eps\ra 0} \phiseps = 0$, 
\be\label{Scaling:patchnum}
\Vepsinf_{app} e_1\cdot e_1 \sim \frac{\alpha}{\sqrt{\phiseps}}\quad \text{as }\eps \ra 0.
\ee

To compare this numerical result with the theoretical result given by Theorem \ref{thm:patch}, let us consider the critical case $\aeps/\eps^2 \ra  C_0>0$. In that case, there exists a two by two matrix $M_0$, depending on the pattern $T$, such that $\lim_{\eps\ra 0}\eps \Vepsinf = \frac{1}{C_0} M_0^{-1}$. For circular or squared patterns centered in the unit square, as observed above, the matrices $\Vepsinf$, and consequently the matrix $M_0$, are diagonal. Moreover, since these patterns are invariant by a rotation of angle $\pi/2$, one can easily see that the corresponding matrix $\Vepsinf$ satisfies $\Vepsinf e_1\cdot e_1 = \Vepsinf e_2\cdot e_2$. Consequently, there exists $\lambda_0>0$ such that $M_0=\left( \begin{smallmatrix}
\lambda_0 & 0\\
0 & \lambda_0\\
\end{smallmatrix}\right)$, and the following relation holds:
\bestar
\lim_{\eps\ra 0} \eps \Vepsinf e_1\cdot e_1 = \frac{1}{C_0 \,\lambda_0}.
\eestar
Besides, using the definition of $\phiseps$ in the case of patches, the asymptotic relation \eqref{Scaling:patchnum} yields
\bestar
\lim_{\eps\ra 0} \eps \Vepsinf_{app} e_1\cdot e_1 = \frac{\alpha}{C_0\sqrt{|T|}}.
\eestar
Thus, the numerical value of the slip length $\alpha/(C_0\sqrt{|T|})$, that can be deduced from the asymptotic behavior \eqref{Scaling:patchnum} in the critical case, is consistent with Theorem \ref{thm:patch}. The coefficient of the matrix $M_0$ can be approximated by $\lambda_0 \approx \sqrt{|T|}/\alpha$.

We notice that the results concerning the sub-critical and super-critical cases can also be retrieved, at least formally, from relation \eqref{Scaling:patchnum}. Indeed, since $\eps/\sqrt{\phiseps}=\eps^2/(\aeps \sqrt{|T|})$, we obtain in the sub-critical case:
$
\lim_{\eps\ra 0} \eps \Vepsinf_{app} e_1\cdot e_1 = +\infty,
$
which corresponds formally to an infinite slip length in the $e_1$ direction, that is, a perfect slip condition. In the same manner, in the super-critical case, we obtain 
$
\lim_{\eps\ra 0} \eps \Vepsinf_{app} e_1\cdot e_1 = 0,
$
which corresponds to adherence in the $e_1$ direction.

\paragraph{Computation of the average slip length, in the case of riblets.}

In that case, exact computations are available in the literature, that give the average slip lengths in the $e_1$ and $e_2$ direction as a function of the solid fraction $\phiseps$ (see for instance Ref. \refcite{Phi}):
\be \label{ExactFormulaRib}
\Vepsinf e_1\cdot e_1 = -\ln\left[\cos\left(\frac{\pi}{2}(1-\phiseps)\right) \right]/\pi,\quad
\Vepsinf e_2\cdot e_2 = -\ln\left[\cos\left(\frac{\pi}{2}(1-\phiseps)\right) \right]/(2\pi). 
\ee
We have plotted in Figure \ref{Fig:riblets-x-y} the computed value of the average slip lengths $\Vepsinf_{app} e_1\cdot e_1$ and $\Vepsinf_{app} e_2\cdot e_2$, against $\phiseps$, as well as the exact values defined by formulas \eqref{ExactFormulaRib}. We observe that the numerical values are close to the expected ones. 

Once again, formulas \eqref{ExactFormulaRib} and the numerical behavior of the average slip length shown in Figure \ref{Fig:riblets-x-y}, are consistent with the theoretical results of Theorem \ref{thm:riblet}. Indeed, in the critical case $\lim_{\eps\ra 0} -\eps \ln (\aeps)=C_0>0$, using the expression $\phiseps=(\aeps|I|)/\eps$, one obtains by a straightforward computation that $\eps \ln\left[\cos\left(\frac{\pi}{2}(1-\phiseps)\right) \right]\ra -C_0$ as $\eps \ra$. Consequently, the slip length in the directions $e_1$ and $e_2$ are respectively given by
\bestar
\lim_{\eps\ra 0}\eps \Vepsinf e_1\cdot e_1 = \frac{C_0}{\pi},\quad \lim_{\eps\ra 0}\eps \Vepsinf e_2\cdot e_2 = \frac{C_0}{2\pi}.
\eestar

\paragraph{Influence of the shape of the no-slip area: comparative results.}

In order to provide a comparison between the efficiency of patches and riblets in terms of slip length, we consider the slip length in the direction of the constant pressure gradient $f=2e_i$, with $i=1$ or $i=2$.  For circular or squared patterns, the average slip length is given by $\Vepsinf e_1\cdot e_1$. In the case of riblets, we consider two configurations of physical interest:
\begin{itemize}
\item riblets parallel to the flow: $f=2e_1$, the average slip length is defined by $\Vepsinf e_1\cdot e_1$;
\item riblets orthogonal to the flow: $f=2e_2$, the average slip length is $\Vepsinf e_2\cdot e_2$.
\end{itemize}
The results are plotted in Figure \ref{Fig:trace-global}. As stated in Remark \ref{rem:patches-vs-riblets}, page \pageref{rem:patches-vs-riblets}, these numerical results confirm that the riblets parallel to the flow are not necessarily optimal. Indeed, if the solid fraction $\phiseps$ is small enough, say $\phiseps<0.1$, the circular or squared patches produce a superior slip length.

To estimate the influence of the shape of the pattern on the slip length, we have considered families of rectangles of fixed area $\phiseps$, that are centered in the unit square. For $\phiseps=0.01, 0.04, 0.09$ we have computed the average slip length $\Vepsinf_{app} e_1\cdot e_1$, in the direction $e_1$, associated to each of these rectangular patterns. The results are plotted in figure \ref{Fig:allongement}, against the dimension $L$ of each rectangular pattern, in the $e_1$ direction. For each solid fraction $\phiseps$, the extremal values associated to $L=\phiseps$ and $L=1$, correspond respectively to a riblet orthogonal to the flow, and parallel to the flow.

We notice that, for each family of rectangular patterns of fixed area, the riblet orthogonal to the flow provides always the smallest average slip length. As already mentionned, the riblet parallel to the flow is not optimal, especially for small values of the solid fraction $\phiseps=0.01$, $\phiseps=0.04$. In that cases, the curves present  a unique maximum, and the associated optimal size $L$ of the rectangle is slightly superior to the size $\sqrt{\phiseps}$ of the square of same area. For these values of the solid fraction, the optimal rectangular pattern will present a certain anisotropy in the direction of the flow.




\begin{figure}
\begin{center}
\begin{tikzpicture}[scale=0.8]
\begin{axis}[ axis x line=bottom, axis y line = left,
xlabel={$1/\sqrt{\phiseps}$}, ylabel={$\Vepsinf_{app} e_1\cdot e_1$},
ymin=0, ymax=2.5,
legend entries={Circular patches,Squared patches}, legend style={at={(0.02,1)}, anchor=north west}]
\addplot[mark=o] table[x index=3, y index=4]{res_disque-light.txt};
\addplot[mark=square] table[x index =3, y index=4]{res_carre-light.txt};
\end{axis}
\end{tikzpicture}
\end{center}
\caption{Numerical value of the average slip length $\Vepsinf_{app} e_1\cdot e_1$ plotted against $1/\sqrt{\phiseps}$, for circular patches and squared patches.}
\label{Fig:patches-disks-squares}
\end{figure}
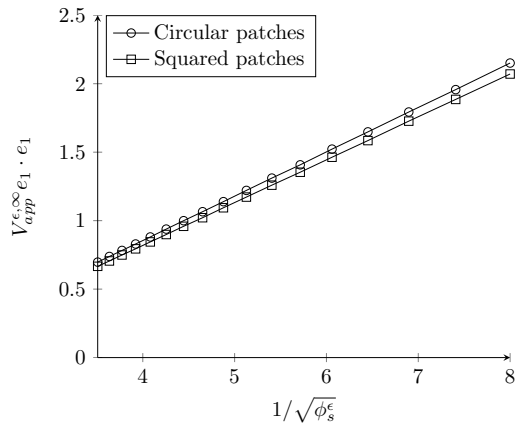

\begin{figure}
\begin{center}
\begin{tikzpicture}[scale=0.8]
\begin{axis}[ axis x line=bottom, axis y line = left,
xlabel={$\phiseps$}, ylabel={Average slip length},
ymin=0,
legend entries={$\Vepsinf_{app} e_1\cdot e_1$, $\Vepsinf_{app} e_2\cdot e_2$, Exact values}, 
legend style={at={(1,1)}, anchor=north east}]
\addplot[mark=-] table[x index=1, y index=4]{res_riblet_x-global.txt};
\addplot[mark=|] table[x index=1, y index=4]{res_riblet_y-global.txt};
\addplot[mark=no markers,dashed] table[x index=1, y index=5]{res_riblet_x-complet.txt};
\addplot[mark=no markers,dashed] table[x index=1, y index=5]{res_riblet_y-complet.txt};
\end{axis}
\end{tikzpicture}
\end{center}
\caption{Numerical values of the average slip lengths $\Vepsinf_{app} e_1\cdot e_1$ and $\Vepsinf_{app} e_2\cdot e_2$, plotted against $\phiseps$, in the case of riblets. The dashed lines represent the exact value of the average slip lengths, defined by formulas \eqref{ExactFormulaRib}.}
\label{Fig:riblets-x-y}
\end{figure}
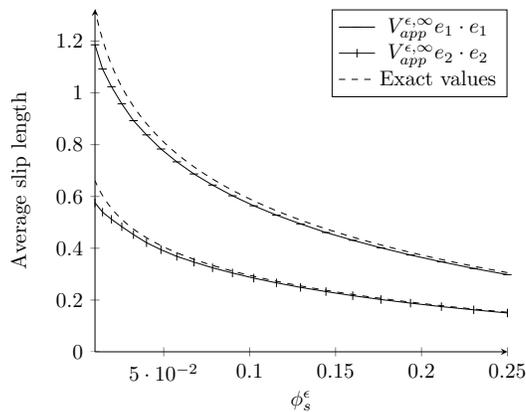

\begin{figure}
\begin{center}
\begin{tikzpicture}[scale=0.8]
\begin{axis}[ axis x line=bottom, axis y line = left,
xlabel={$\phiseps$}, ylabel={Average slip length},
legend entries={Disks, Squares, Riblets $\parallel$, Riblets $\bot$}, 
legend style={at={(1,1)}, anchor=north east}]
\addplot[mark=o] table[x index=1, y index=4]{res_disque-global.txt};
\addplot[mark=square] table[x index =1, y index=4]{res_carre-global.txt};
\addplot[mark=-] table[x index=1, y index=4]{res_riblet_x-global.txt};
\addplot[mark=|] table[x index=1, y index=4]{res_riblet_y-global.txt};
\end{axis}
\end{tikzpicture}
\end{center}
\caption{Average slip length in the direction of the flow, in the case of circular patches, squared patches and riblets parallel and orthogonal to the flow.}
\label{Fig:trace-global}
\end{figure}
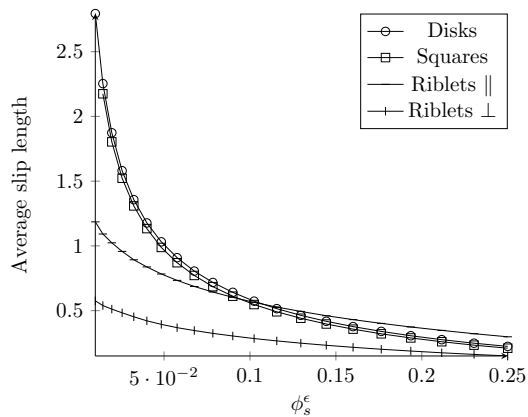

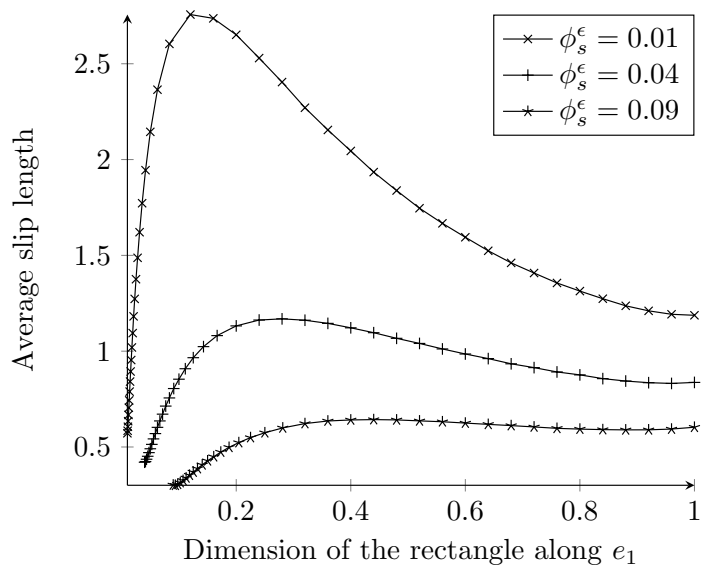
\begin{figure}
\begin{center}
\begin{tikzpicture}[scale=1.1]
\begin{axis}[ axis x line=bottom, axis y line = left,
xlabel={Dimension of the rectangle along $e_1$}, ylabel={Average slip length},
legend entries={$\phiseps=0.01$,$\phiseps=0.04$,$\phiseps=0.09$}, 
legend style={at={(1,1)}, anchor=north east}]
\addplot[mark=x] table[x index=1, y index=5]{allongement-trace-0.01.txt};
\addplot[mark=+] table[x index=1, y index=5]{allongement-trace-0.04.txt};
\addplot[mark=star] table[x index=1, y index=5]{allongement-trace-0.09.txt};
\end{axis}
\end{tikzpicture}
\end{center}
\caption{Numerical values of the average slip length $\Vepsinf_{app}e_1\cdot e_1$, produced by rectangular patterns of given area $\phiseps\in\{0.01, 0.04, 0.09\}$, and plotted against the dimension $L$ of the rectangle in the $e_1$ direction. The extremal value $L=\phiseps$ (resp.\,$L=1$) corresponds to the riblet  orthogonal to the flow (resp.\,parallel to the flow).}
\label{Fig:allongement}
\end{figure}


\section*{Appendix: proof of inequality \eqref{ineq:super-critical}}

To obtain inequality \eqref{ineq:super-critical}, it is enough to prove that for every $k\in [|0, \eps^{-1}|]^2$, 
\be \label{sucritique:ineq-sur-cellule}
\int_{S^\eps_k \times\{0\}} |u^{\eps}|^2 \: \leq \:  \eta(\eps)\int_{B_k^{\eps, +}}|\nabla u^{\eps}|^2\qquad \forall 0<\eps<\eps_0.
\ee
A summation over $k\in [|0,\eps^{-1}|]^2$ then leads to inequality \eqref{ineq:super-critical}.

\medskip
Let $k\in  [|0, \eps^{-1}|]^2$. By rescaling the trace inequality in the half cube $[0,1]^2\times [0,\frac{1}{2}]$, we obtain the existence of a constant $C>0$ such that
\be \label{ineq:trace-rescaled}
\int_{S_k^\eps\times \{0\}}|u^\eps|^2\leq C \left(\eps \int_{P_k^{\eps,+}}|\nabla u^\eps|^2 + \frac{1}{\eps} \int_{P_k^{\eps,+}}|u^\eps|^2\right).
\ee
To estimate the $L^2$ norm of $u^\eps$ by the $L^2$ norm of its gradient, we adapt Lemma 3.4.1 in Ref.  \refcite{Allaire2} to our bidimensional array of holes. We denote by $B^\eps_k$ the ball circumscribing the cube $P_k^{\eps}$. Of course, the upper half-cube  $P_k^{\eps, +}$ is contained in the upper half-ball $B_k^{\eps,+}$. Moreover, since the model no-slip zone $T$ contains a disk of radius $\alpha$ centered at the origin, each elementary no-slip pattern $\eps k + T^\eps$ contains a disk  of radius $\aeps\alpha$, centered in the square $S_k^\eps$. Let $\tB_k^\eps$ be the 3d ball of same center and radius, and $\tB_k^{\eps,+}$ be the corresponding half ball. With this notation, we can write 
\bestar
\int_{P_k^{\eps,+}}|u^\eps|^2 \leq \int_{B_k^{\eps,+}\setminus \tB_k^{\eps,+}}|u^\eps|^2 + \int_{\tB_k^{\eps,+}}|u^\eps|^2.
\eestar
To estimate the contribution of the exterior part $B_k^{\eps,+}\setminus \tB_k^{\eps,+}$, we use spherical coordinates $(\rho,\phi,\theta)$ centered at point $\: \eps k + (\frac{\eps}{2}, \frac{\eps}{2}, 0)$. 
The radius of $B_k^\eps$ being equal to $\frac{\eps\sqrt{3}}{2}$, integrating along rays, we get for every $r',r$ such that $0<r' < \aeps \alpha < r < \frac{\eps\sqrt{3}}{2}$,
\bestar
u^\eps(r,\phi,\theta)=u^\eps(r',\phi,\theta ) + \int_{r'}^r \d_{\rho} u^\eps(\rho,\phi,\theta ) \ud \rho,
\eestar
which yields
\bestar
|u^\eps(r,\phi,\theta)|^2 \leq 2 |u^\eps(r',\phi,\theta)|^2 + 2 \left( \int_{r'}^r \d_{\rho} u^\eps(\rho,\phi,\theta) \ud \rho \right)^2. 
\eestar
Multiplying last inequality by $r^2(r')^2\sin \theta$ and integrating on $r'\in (0,\aeps \alpha)$, $r\in(\aeps\alpha, \frac{\eps\sqrt{3}}{2})$, $\phi\in (0,2\pi)$, $\theta\in (0,\pi/2)$, we obtain the inequality
\be \label{ineq:IJK} 
I^\eps\leq 2 J^\eps + 2 K^\eps
\ee
where the integrals $I^\eps$, $J^\eps$, $K^\eps$ are respectively defined by
\begin{align*} 
I^\eps & =\int_{r'=0}^{\aeps\alpha }\int_{r=\aeps\alpha}^{\frac{\eps\sqrt{3}}{2}}\int_{\theta}\int_{\phi} |u(r,\phi,\theta)|^2 r^2 (r')^2 \sin \theta\ \ud \theta\ \ud \phi\ \ud r\ \ud r',\\
J^\eps & = \int_{r'=0}^{\aeps\alpha}\int_{r=\aeps\alpha}^
{\frac{\eps\sqrt{3}}{2}}\int_{\theta}\int_{\phi} |u(r',\phi,\theta)|^2 r^2 (r')^2 \sin \theta\ \ud \theta\ \ud \phi\ \ud r\ \ud r', \\
K^\eps & =\int_{r'=0}^{\aeps\alpha}\int_{r=\aeps\alpha}^
{\frac{\eps\sqrt{3}}{2}}\int_{\theta}\int_{\phi} \left( \int_{r'}^r \d_{\rho} u^\eps(\rho,\phi,\theta) \ud \rho \right)^2 r^2 (r')^2 \sin \theta\ \ud \theta\ \ud \phi\ \ud r\ \ud r'.
\end{align*}
By Fubini theorem,
$$
I^\eps  = \left( 
\int_{0}^{\aeps\alpha }\ (r')^2\ud r'
\right)
\left( 
\int_{r=\aeps\alpha}^{\frac{\eps\sqrt{3}}{2}}\int_{\theta}\int_{\phi} |u(r,\phi,\theta)|^2 r^2  \sin \theta\ \ud \theta\ \ud \phi\ \ud r
\right)\\
 = \frac{\aeps^3 \alpha^3 }{3}\int_{B_k^{\eps,+}\setminus \tB_k^{\eps,+}}|u^\eps|^2,
$$
and by an analogous computation,
\begin{align*}
J^\eps & =\left(\eps^3\frac{\sqrt{3}}{8} -  \frac{\aeps^3\alpha^3}{3} \right)\int_{\tB_k^{\eps,+}}|u^\eps|^2.
\end{align*}
By Schwarz inequality, 
$$
\left( \int_{r'}^r \d_{\rho} u^\eps \ud \rho \right)^2 \leq \left( \int_{r'}^r \frac{1}{\rho^2}\ud \rho\right) \left( \int_{r'}^r \rho^2 |\d_{\rho} u^\eps|^2  \ud \rho \right)\\
 \leq \frac{1}{r'} \int_{r'}^r \rho^2 |\d_{\rho} u^\eps|^2  \ud \rho,
$$
which yields
$$
K^\eps\leq \left( \int_{\aeps\alpha}^{\frac{\eps\sqrt{3}}{2}}r^2\ \ud r\right) \left( \int_0^{\aeps\alpha}r'\ \ud r'\right)\left(\int_{B_k^{\eps,+}}|\nabla u^\eps|^2\right)\\
\leq \frac{\sqrt{3}}{16}\aeps^2\alpha^2\eps^3 \int_{B_k^{\eps,+}}|\nabla u^\eps|^2.
$$
Consequently, inequality \eqref{ineq:IJK} leads to
\be \label{ineq:surcritique-couronne1}
\int_{B_k^{\eps,+}\setminus \tB_k^{\eps,+}}|u^\eps|^2 \leq \frac{3\sqrt{3}}{8}\frac{\eps^3}{\aeps\alpha} \left( \frac{2}{\aeps^2\alpha^2} \int_{\tB_k^{\eps,+}}|u^\eps|^2 + \int_{B_k^{\eps,+}}|\nabla u^\eps|^2\right).
\ee
Since $u^\eps$ vanishes on $\tB_k^{\eps,+}\cap (\R^2\times\{0\})$, using Poincar\'e inequality in a cylinder of height $\aeps\alpha$, we obtain  the following estimate:
\begin{align*}
\int_{\tB_k^{\eps,+}}|u^\eps|^2 
& \leq \aeps^2\alpha^2\int_{B_k^{\eps,+}}|\nabla u^\eps|^2.
\end{align*}
Injecting this inequality into estimate \eqref{ineq:surcritique-couronne1}, we obtain:
\bestar
\int_{B_k^{\eps,+}\setminus \tB_k^{\eps,+}}|u^\eps|^2 \leq \frac{9\sqrt{3}\eps^3}{\aeps\alpha} \int_{B_k^{\eps,+}}|\nabla u^\eps|^2,
\eestar
and summing these two inequalities, we get
\bestar
\int_{B_k^{\eps,+}} |u^\eps|^2 \leq \left( \aeps^2\alpha^2 + \frac{9\sqrt{3}\eps^3}{\aeps\alpha} \right) \int_{B_k^{\eps,+}}|\nabla u^\eps|^2.
\eestar
Finally, using inequality \eqref{ineq:trace-rescaled}, we obtain estimate \eqref{sucritique:ineq-sur-cellule}, where $\eta(\eps)$ is defined by 
\bestar
\eta(\eps)=C\left(\eps + \frac{\aeps^2\alpha^2}{\eps} + \frac{9\sqrt{3}\eps^2}{\aeps\alpha} \right),
\eestar
and converges to $0$ as $\eps\ra 0$, since $\aeps<\eps$ and $\aeps\gg \eps^2$.
\qed
\bibliography{BDGV4}
\end{document}